\theoremstyle{plain} 
\newtheorem{theo}{Theorem}[section]  
\newtheorem{prop}[theo]{Proposition}
\newtheorem{lemma}[theo]{Lemma}
\theoremstyle{definition}
\newtheorem{defi}[theo]{Definition}
\newtheorem{ex}[theo]{Example}
\theoremstyle{remark}
\newtheorem{remark}[theo]{Remark}
\newtheorem{nothing}[theo]{\noindent\!\!\bf}
\DeclareMathOperator{\lcm}{lcm}
\DeclareMathOperator{\Sing}{Sing}
\DeclareMathOperator{\ord}{ord}
\DeclareMathOperator{\Mat}{Mat}
\DeclareMathOperator{\WeDiv}{WeDiv}
\DeclareMathOperator{\CaDiv}{CaDiv}
\DeclareMathOperator{\Div}{Div}
\DeclareMathOperator{\Pic}{Pic}
\DeclareMathOperator{\Cl}{Cl}
\DeclareMathOperator{\codim}{codim}
\DeclareMathOperator{\Supp}{Supp}
\DeclareMathOperator{\length}{length}
\DeclareMathOperator{\diag}{Diag}
\newcommand\Z{\mathbb{Z}}
\newcommand\N{\mathbb{N}}
\newcommand\C{\mathbb{C}}
\newcommand\Q{\textbf{Q}}
\newcommand\bP{\mathbb{P}}
\newcommand\w{\omega}
\newcommand\cO{\mathcal{O}}
\newcommand\bd{\mathbf{d}}
\newcommand\be{\mathbf{e}}
\newcommand\ba{\mathbf{a}}
\newcommand\bxi{\boldsymbol{\xi}}
\title{Cartier and Weil Divisors on Varieties with Quotient Singularities}
\author[E.~Artal]{Enrique Artal Bartolo}
\address[E.~Artal]{Departamento de Matem\'{a}ticas-IUMA  \\
Universidad de Zaragoza \\
C/~Pedro Cerbuna 12, 50009, Zaragoza, Spain}
\email{artal@unizar.es}
\author[J.~Mart\'{i}n-Morales]{Jorge Mart\'{i}n-Morales}
\address[J.~Mart\'{i}n-Morales]{Centro Universitario de la Defensa-IUMA \\
Academia General Militar \\
Ctra.~de Huesca s/n. 50090, Zaragoza, Spain}
\email{jorge@unizar.es}
\urladdr{http://cud.unizar.es/martin}
\author[J.~Ortigas-Galindo]{Jorge Ortigas-Galindo}
\address[J.~Ortigas-Galindo]{Departamento de Matem\'{a}ticas-IUMA  \\
Universidad de Zaragoza \\
C/~Pedro Cerbuna 12, 50009, Zaragoza, Spain}
\email{jortigas@unizar.es}
\urladdr{\url{http://riemann.unizar.es/\~jortigas/}}
\date{\today}
\keywords{Quotient singularity, intersection number, embedded $\Q$-resolution.}
\subjclass[2000]{Primary: 32S25; Secondary: 32S45}
\begin{document}

\begin{abstract}
The main goal of this paper is to show
that the notions of Weil and Cartier $\mathbb{Q}$-divisors coincide for
$V$-varieties and give a procedure to express a rational Weil Divisor as a
rational Cartier divisor.
The theory is illustrated with weighted projective spaces
and weighted blow-ups.
\end{abstract}

\keywords{Quotient singularity, Weil and Cartier divisors}
\subjclass[2000]{32S25, 32S45}

\maketitle
\setcounter{tocdepth}{1}
\tableofcontents

\section*{Introduction}

Singularity Theory deals with the study of complex spaces with singularities.
The most powerful tool to study singularities is resolution,
i.e., a proper analytic morphism from a smooth variety which is
an isomorphism outside the singular locus. The existence of such
resolutions are guaranteed by the work of Hironaka. Usually
the combinatorics of the exceptional locus of the resolution is quite complicated.

There is a class of singularities which is well-understood, namely normal singularities
which are obtained as the quotient of a ball in $\C^n$ by the linear action of a finite
group. Spaces admitting only such singularities are called $V$-varieties and were introduced
by Bailly. These varieties share a lot of properties with smooth ones, e.g., 
projective $V$-varieties also carry a Hodge structure and a natural notion of
normal crossing divisors can be defined (called $\mathbb{Q}$-normal crossing divisors).
As it will be developed in the Ph.D. thesis of the second author, the study
of so-called $\Q$-resolutions (allowing quotient singularities, specially Abelian)
provides a better comprehension of some families of singularities.

Intersection theory is needed for this study and we need to deal with divisors
in $V$-varieties. Two kind of divisors appear in the litterature: Weil and Cartier
divisors. Weil divisors are formal sums of hypersurfaces and Cartier divisors
are sections of the quotient sheaf of meromorphic functions modulo non-vanishing
holomorphic functions. The relationship between Cartier divisors and line bundles
provide a nice way to define the intersection of two divisors. In the smooth category,
the two notions coincide but it is not the case for singular ones (even for normal varities).
One can also consider Weil and Cartier $\mathbb{Q}$-divisors (tensorizing the corresponding
groups by  $\mathbb{Q}$). The main result of this paper is that these two notions coincide
for $V$-varieties and hence an intersection theory can be developed for Weil $\mathbb{Q}$-divisors
using the theory of line bundles.

This result is probably known for specialists but we have not found a proof in the litterature.
There are some partial results for toric varities (defined with simplicial cones). Moreover,
in this work we give an algorithm to represent a Weil $\mathbb{Q}$-divisor as a Cartier
$\mathbb{Q}$-divisor. This example will be illustrated in some spaces obtained
by weighted blow-ups. These blow-ups can be understood from toric geometry but in this
work we present in a more geometric way, generalizing the presentation of standard blow-ups.
For this definition, we need to give a presentation of abelian quotient singularities
In a forthcoming work, we will develop an intersection theory over surfaces with
abelian quotient singularities.

The paper is organized as follows. In \S\ref{sec-vq} we give a general presentation of varieties 
with quotient singularities and list their basic properties. In \S\ref{weighted_projective_space} we deal with
weighted projective spaces and in \S\ref{sec-wbr} we introduced weighted-blow-ups.
The main results about rational Weil and Cartier divisors on $V$-varieties  are stated and proved 
in \S\ref{sec-cw}. We finish the work with some conlusions and the expected future work.

{\bf Acknowledgements.}
We wish to express our gratitude to J.I. Cogolludo for his support. All authors are partially supported by
the Spanish projects MTM2010-2010-21740-C02-02 and ``E15 Grupo Consolidado
Geometr{\'i}a'' from the government of Arag{\'o}n. Also, the second author,
J.~Mart{\'i}n-Morales, is partially supported by FQM-333, Junta de Andaluc{\'i}a.

\section{$V$-Manifolds and Quotient Singularities}\label{sec-vq}

\begin{defi}
A $V$-manifold of dimension $n$ is a complex analytic space which admits an open
covering $\{U_i\}$ such that $U_i$ is analytically isomorphic to $B_i/G_i$ where
$B_i \subset \C^n$ is an open ball and $G_i$ is a finite subgroup of $GL(n,\C)$.
\end{defi}
 
The concept of $V$-manifolds was introduced in~\cite{Satake56} and they have the same homological
properties over $\mathbb{Q}$ as manifolds. For instance, they admit a Poincar{\'e}
duality if they are compact and carry a pure Hodge structure if they are compact
and Kähler, see~\cite{Baily56}. They have been locally classified by
Prill~\cite{Prill67}. To state this local result we need the following.

\begin{defi}
A finite subgroup $G$ of $GL(n,\C)$ is called {\em small} if no element of $G$
has $1$ as an eigenvalue of multiplicity precisely $n-1$, that is, $G$ does not
contain rotations around hyperplanes other than the identity.
\end{defi}

For every finite subgroup $G$ of $GL(n,\C)$ denote by $G_{\text{big}}$ the
normal subgroup of $G$ generated by all rotations around hyperplanes. Then the
$G_{\text{big}}$-invariant polynomials form a polynomial algebra and hence
$\C^n/G_{\text{big}}$ is isomorphic to $\C^n$.

The group $G/G_{\text{big}}$ maps isomorphically to a small subgroup of
$GL(n,\C)$, once a basis of invariant polynomials has been chosen. Hence the
local classification of $V$-manifolds reduces to the classification of actions
of small subgroups of $GL(n,\C)$.

\begin{theo}\label{th_Prill}{\rm (\cite{Prill67}).}~Let $G_1$, $G_2$ be small
subgroups of $GL(n,\C)$. Then $\C^n/G_1$ is isomorphic to $\C^n/G_2$ if and only
if $G_1$ and $G_2$ are conjugate subgroups. $\hfill \Box$
\end{theo}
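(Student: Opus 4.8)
The implication ``$G_1,G_2$ conjugate $\Rightarrow$ $\C^n/G_1\cong\C^n/G_2$'' is immediate: an $A\in GL(n,\C)$ with $AG_1A^{-1}=G_2$ intertwines the two linear actions and hence descends to an isomorphism of quotients. For the converse the plan is to lift a given isomorphism of the quotients to an equivariant biholomorphism of $\C^n$ and then differentiate it at the origin.

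First I would record the covering-space picture. If $G\le GL(n,\C)$ is small, every non-identity element fixes only a linear subspace of codimension $\ge 2$, so the non-free locus $B_G:=\bigcup_{g\ne 1}\operatorname{Fix}(g)$ is an analytic subset of $\C^n$ of codimension $\ge 2$; since $\C^n$ is simply connected and $B_G$ has real codimension $\ge 4$, its complement $W_G:=\C^n\setminus B_G$ is connected and simply connected, and $G$ acts freely and properly on it. Hence $\pi_G\colon W_G\to U_G:=\pi_G(W_G)$ is a universal covering with deck group $G$. The essential use of smallness is the claim that $\pi_G(B_G)$ equals $\Sing(\C^n/G)$: a free orbit obviously maps to a smooth point, whereas near a point with non-trivial (necessarily small) stabiliser $H$ the quotient is locally isomorphic to $(\C^n,0)/H$, and since $H\ne 1$ contains no rotation around a hyperplane, Chevalley--Shephard--Todd shows $\C[x_1,\dots,x_n]^H$ is not a polynomial ring, so by a standard graded argument its localisation at the irrelevant ideal is not regular, i.e.\ the quotient is singular at the image of $0$.

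Given an analytic isomorphism $\phi\colon\C^n/G_1\to\C^n/G_2$, it preserves singular loci and hence restricts to an isomorphism $U_{G_1}\to U_{G_2}$. As $W_{G_1}$ is simply connected, the lifting criterion yields a biholomorphism $F\colon W_{G_1}\to W_{G_2}$ with $\pi_{G_2}\circ F=\phi\circ\pi_{G_1}$; comparing lifts, $g\mapsto\alpha(g):=FgF^{-1}$ is a group isomorphism $G_1\xrightarrow{\sim}G_2$ and $F\circ g=\alpha(g)\circ F$ on $W_{G_1}$. Since $\phi\circ\pi_{G_1}$ is bounded on bounded sets and $\pi_{G_2}$ is finite, $F$ is locally bounded near $B_{G_1}$, so (the codimension being $\ge 2$) the Riemann extension theorem upgrades $F$ and $F^{-1}$ to mutually inverse biholomorphisms of $\C^n$, and $F\circ g=\alpha(g)\circ F$ persists on all of $\C^n$. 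Finally, the common fixed space $L_i:=(\C^n)^{G_i}$ is linear and $F(L_1)=L_2$, so $F(0)\in L_2$; replacing $F$ by $x\mapsto F(x)-F(0)$, a translation by an element of $L_2$ that commutes with the $G_2$-action, we may assume $F(0)=0$ without losing equivariance. Differentiating $F(gx)=\alpha(g)F(x)$ at $x=0$, and using that $g,\alpha(g)$ are linear with $g\cdot 0=0$, gives $A\,g=\alpha(g)\,A$ for $A:=d_0F\in GL(n,\C)$; thus $\alpha(g)=AgA^{-1}$ for every $g$, i.e.\ $G_2=AG_1A^{-1}$.

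The step I expect to be the real obstacle is the identification $\pi_G(B_G)=\Sing(\C^n/G)$: this is the only place where smallness is genuinely used---without it the statement fails, as non-conjugate complex reflection groups with isomorphic smooth quotient $\C^n$ show---and it rests on Chevalley--Shephard--Todd rather than on anything formal. The extension of $F$ across a codimension-$\ge 2$ set and the bookkeeping with deck transformations are routine by comparison.
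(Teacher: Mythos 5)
Your sketch is correct, and it is essentially the classical argument behind Prill's theorem; note that the paper itself gives no proof of this statement (it is quoted from \cite{Prill67} and closed with a box), so there is no internal proof to compare against. Your reconstruction hits exactly the points on which the standard proof turns: smallness forces every $\operatorname{Fix}(g)$, $g\neq 1$, to have codimension at least two and, via Chevalley--Shephard--Todd applied to the (small) stabilizers, identifies the image of the non-free locus with $\Sing(\C^n/G)$; the isomorphism of quotients then restricts to the regular loci, lifts to the simply connected complements $W_{G_i}$ by covering theory, extends across the codimension-two bad set by Riemann extension (local boundedness coming from properness of the finite quotient map), and is finally linearized by differentiating at a fixed point after the harmless translation by an element of the common fixed subspace. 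The only steps I would ask you to make explicit in a full write-up are the local model $(\C^n/G,[x])\cong(\C^n/G_x,[0])$ with $G_x$ again small, and the graded argument passing from ``invariant ring not polynomial'' to ``analytic germ singular''; both are standard and you have flagged them correctly as the places where smallness is genuinely used.
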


We are interested in $V$-manifolds where the quotient spaces $B_i / G_i$ are
given by (finite) abelian groups. In this case the following notation is used.

\begin{nothing}\label{notation_action}
For $\bd: = {}^t(d_1 \ldots d_r)$ we denote 
$\mu_{\bd} := \mu_{d_1} \times \cdots \times \mu_{d_r}$ a finite
abelian group written as a product of finite cyclic groups, that is, $\mu_{d_i}$
is the cyclic group of $d_i$-th roots of unity in $\C$. Consider a matrix of weight
vectors 
$$
A := (a_{ij})_{i,j} = [\ba_1 \, | \, \cdots \, | \, \ba_n ] \in\Mat (r
\times n, \Z),\quad
\ba_i:={}^t (a_{1 i}\dots a_{r i})\in\Mat(r\times 1,\Z),
$$ 
and the action
\begin{equation}\label{action_XdA}
\begin{array}{cr}
( \mu_{d_1} \times \cdots \times \mu_{d_r} ) \times \C^n  \longrightarrow  \C^n,&\bxi_\bd := (\xi_{d_1}, \ldots, \xi_{d_r}),
\\[0.15cm]
\big( \bxi_{\bd} , \mathbf{x} \big)  \mapsto  (\xi_{d_1}^{a_{11}} \cdot\ldots\cdot
\xi_{d_r}^{a_{r1}}\, x_1,\, \ldots\, , \xi_{d_1}^{a_{1n}}\cdot\ldots\cdot
\xi_{d_r}^{a_{rn}}\, x_n ),&
\mathbf{x} := (x_1,\ldots,x_n).
\end{array}
\end{equation}
Note that the $i$-th row of the matrix $A$ can be considered modulo $d_i$. The
set of all orbits $\C^n / G$ is called ({\em cyclic}) {\em quotient space of
type $(\bd;A)$} and it is denoted by
$$
  X(\bd; A) := X \left( \begin{array}{c|ccc} d_1 & a_{11} & \cdots & a_{1n}\\
\vdots & \vdots & \ddots & \vdots \\ d_r & a_{r1} & \cdots & a_{rn} \end{array}
\right).
$$
The orbit of an element $\mathbf{x}\in \C^n$ under this action is denoted
by $[\mathbf{x}]_{(\bd; A)}$ and the subindex is omitted if no ambiguity
seems likely to arise. With multi-index notation
the action takes the simple form
\begin{eqnarray*}
\mu_\bd \times \C^n & \longrightarrow & \C^n, \\
(\bxi_\bd, \mathbf{x}) & \mapsto & \bxi_\bd\cdot\mathbf{x}:=(\bxi_\bd^{\ba_1}\, x_1, \ldots, \bxi_{\bd}^{\ba_n}\, x_n).
\end{eqnarray*}
\end{nothing}

The following classic result shows that the family of varieties which can locally be
written like~$X(\bd; A)$ is exactly the same as the family of $V$-manifolds with
abelian quotient singularities.

\begin{lemma}\cite{Prill67}
Let $G$ be a finite abelian subgroup of $GL(n,\C)$. Then $\C^n / G$ is
isomorphic to some quotient space of type $(\bd;A)$.
\end{lemma}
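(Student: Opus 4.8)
The plan is to use the structure theory of finite abelian linear groups: any finite abelian subgroup $G$ of $GL(n,\C)$ is simultaneously diagonalizable, since $\C$ is algebraically closed and every element of $G$ has finite order (hence is diagonalizable), and commuting diagonalizable matrices can be simultaneously diagonalized. So, after a linear change of coordinates, we may assume $G \subset GL(n,\C)$ consists of diagonal matrices, and the isomorphism class of $\C^n/G$ is unchanged by such a change of coordinates.

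Next I would invoke the classification of finite abelian groups: write $G \cong \mu_{d_1} \times \cdots \times \mu_{d_r}$ as a product of finite cyclic groups, fixing generators $g_1,\dots,g_r$ of the respective factors. Since each $g_i$ is diagonal of order dividing $d_i$, its $j$-th diagonal entry is a $d_i$-th root of unity, which I write as $\xi_{d_i}^{a_{ij}}$ for a fixed primitive root $\xi_{d_i}$ and some integer $a_{ij}$, well-defined modulo $d_i$. Assembling the $a_{ij}$ into a matrix $A = (a_{ij}) \in \Mat(r\times n,\Z)$, the action of $G$ on $\C^n$ by these diagonal matrices is by definition exactly the action~\eqref{action_XdA} defining $X(\bd;A)$, where $\bd = {}^t(d_1\dots d_r)$. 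Hence $\C^n/G \cong X(\bd;A)$.

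The only genuinely non-routine point is the simultaneous diagonalization of $G$, and even that is a standard fact in linear algebra; the remainder is bookkeeping with roots of unity. One should remark that the presentation $(\bd;A)$ obtained is far from unique — it depends on the chosen decomposition of $G$ into cyclic factors, on the choice of generators, and on the primitive roots $\xi_{d_i}$ — but the lemma only asserts existence, so no further normalization is needed here.
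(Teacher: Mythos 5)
Your proof is correct and follows essentially the same route as the paper: decompose $G$ as a product of cyclic groups with chosen generators, simultaneously diagonalize these commuting finite-order matrices, and read off the exponents $a_{ij}$ of a primitive $d_i$-th root of unity to obtain the type $(\bd;A)$. Your explicit remark that a linear change of coordinates does not affect the isomorphism class of the quotient, and that the resulting type is not unique, is a harmless (and slightly more careful) elaboration of the same argument.
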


\begin{proof}
The group $G$ can be seen as a direct product
of cyclic groups generated by matrices $M_1, \ldots, M_r$ of order~$d_1,\dots,d_r$
such that 
$$
G = \{ M_1^{i_1} \cdots M_r^{i_r} \mid i_k = 0,\ldots, d_k-1\}.
$$
Each of these matrices $M_i$, $i = 1, \ldots, r$, is conjugated to 
$\diag(\zeta_{d_i}^{a_{i1}},\dots,\zeta_{d_i}^{a_{in}})$
where $\zeta_{d_i}$ is a $d_i$-th primitive root of unity. Moreover, they are
simultaneously diagonalizable because they commute. This proves $\C^n / G \simeq
X(\bd;(a_{ij})_{i,j})$.
\end{proof}

Different types $(\bd;A)$ can give rise to isomorphic quotient spaces, see
Remark~\ref{operation_XdA}. We shall prove that they can always be represented
by an upper triangular matrix of dimension $(n-1) \times n$, see
Lemmas~\ref{movs-dA} and~\ref{upper_triangular}. Finding a simpler type $(\bd;A)$ to represent a
quotient space will lead us to the notions of normalized type
and space, see~Definition~\ref{def_normalized_XdA}.

\begin{remark}\label{operation_XdA}
For $n=3$ the simple group automorphism on $\mu_d \times \mu_d$ given by
$(\xi,\eta) \mapsto (\xi \eta^{-1}, \eta)$ shows that the following two spaces
are isomorphic under the identity map:
$$
X \left(\begin{array}{c|ccc}
d & a_{11} & a_{12} & a_{13}\\
d & a_{21} & a_{22} & a_{23}
\end{array}\right) =
X \left(\begin{array}{c|ccc}
d & a_{11} & a_{12} & a_{13}\\
d & a_{21} - a_{11} & a_{22} - a_{12} & a_{23} - a_{13}
\end{array}\right).
$$
Note that the determinants of the minors of order $2$ are the same in both side
of the previous equation. There is an obvious generalization for 
higher dimensions, allowing row operations when the two rows correspond 
to the same cyclic groups. This is not a strong condition since we
can multiply also any row (of $\bd$ and $A$ simultaneously).
\end{remark}

\begin{ex}\label{Ex_quo_dim1}
When $n=1$ all spaces $X(\bd;A)$ are
isomorphic to~$\C$. 
Dividing each row by $\gcd(d_i, a_{i})$ do not change the quotient
and we can work with $X({}^t(d_1,\ldots,d_r); {}^t(a_{1},\ldots,a_{r}))$ 
where
$\gcd(d_i, a_{i})=1$.

The map $[x] \mapsto x^{d_1}$ gives an isomorphism between $X(d_1; a_{1})$ and
$\C$. For $r=2$ one has that (we write the symbol $=$ when the isomorphism is
induced by the identity map)
$$
\begin{array}{crclcrcl}
\displaystyle \frac{\C}{\mu_{d_1} \times \mu_{d_2}} = & \displaystyle \frac{\,
\C / \mu_{d_1} \,}{\mu_{d_2}} & \cong & \C / \mu_{d_2} & \stackrel{(*)}{=} &
X(d_2; a_{2} d_1) & \cong & \C,\\
& [x] & \mapsto & x^{d_1}, & & [x] & \mapsto & x^{\frac{d_2}{\gcd(d_1,d_2)}}.
\end{array}
$$

To see the equality $(*)$ observe that
$
\xi_{d_2} \cdot x^{d_1} \equiv \xi_{d_2} \cdot [x] = [\xi_{d_2}^{a_{2}} x]
\equiv \xi_{d_2}^{a_{2} d_1} x^{d_1}.
$
It follows that the corresponding quotient space is isomorphic to $\C$ under the
map $[x] \mapsto x^{\lcm (d_1,d_2 )}$.
\end{ex}

From Remark~\ref{operation_XdA} and Example~\ref{Ex_quo_dim1}
the following lemmas follow easily.

\begin{lemma}\label{movs-dA}
The following operations do not change the isomorphism type of $X(\bd;A)$
\begin{enumerate}[\rm(1)]
\item Permutation $\sigma$ of columns of $A$, $[\mathbf{x}]\mapsto[(x_{\sigma(1)},\dots,x_{\sigma(n)})]$.
\item Permutation of rows of $(\bd;A)$, $[\mathbf{x}]\mapsto[\mathbf{x}]$.
\item Multiplication of a row of $(\bd;A)$ by a positive integer, $[\mathbf{x}]\mapsto[\mathbf{x}]$.
\item Multiplication of a row of $A$ by an integer coprime with the
corresponding row in $\bd$, $[\mathbf{x}]\mapsto[\mathbf{x}]$.
\item Replace $a_{i j}$ by $a_{i j}+k d_j$, $[\mathbf{x}]\mapsto[\mathbf{x}]$.
\item If $e$ is coprime with $a_{i n}$ and divides $d_1$ and $a_{1,j}$, $1\leq j<n$, then replace,
$a_{i,n}\mapsto e a_{i,n}$, $[\mathbf{x}]\mapsto[(x_1,\dots,x_n^e)]$.
\item If $d_r=1$ then 
eliminate the last row, $[\mathbf{x}]\mapsto[\mathbf{x}]$.
\end{enumerate}
 
\end{lemma}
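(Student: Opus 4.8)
The plan is to dispatch items (1)--(5) and (7) as nearly immediate consequences of the defining formula~\eqref{action_XdA}, and to spend the real effort on~(6), which is the $n$-dimensional analogue of the operation $(*)$ from Example~\ref{Ex_quo_dim1}. In every case I would produce an explicit finite map $f\colon\C^n\to\C^n$ (often the identity), observe that it intertwines the two actions in question, and then check that the induced map on orbit spaces is a bijection; since all spaces occurring are normal, a bijective morphism between them is an isomorphism (alternatively, one checks that the induced map has degree $1$ on quotients and quotes Zariski's Main Theorem).

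For (1) take $f([\mathbf{x}])=[(x_{\sigma(1)},\dots,x_{\sigma(n)})]$: relabelling coordinates conjugates one diagonal action into the other. For (2) and (7) the \emph{same} group acts in the \emph{same} way on $\C^n$, up to the identity map: permuting rows merely reorders the cyclic factors of $\mu_\bd=\mu_{d_1}\times\cdots\times\mu_{d_r}$, and if $d_r=1$ then $\mu_{d_r}=\{1\}$ and the last row of $A$ plays no role. For (5), since $\xi_{d_i}^{d_i}=1$ the $i$-th row of $A$ matters only modulo the corresponding modulus $d_i$, so adding to $a_{ij}$ a multiple of it changes nothing at all. For (4), when $\gcd(e,d_i)=1$ the map $\xi\mapsto\xi^{e}$ is an automorphism of $\mu_{d_i}$, and the action of type $(\bd;A)$ and the one in which the $i$-th row of $A$ has been multiplied by $e$ differ precisely by this automorphism in the $i$-th slot; hence they have the same image in $GL(n,\C)$ and the same orbits, the identity of $\C^n$ providing the isomorphism. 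For (3), multiplying the $i$-th row of $(\bd;A)$ by a positive integer $k$ replaces $\mu_{d_i}$ by $\mu_{kd_i}$, and the surjection $\mu_{kd_i}\twoheadrightarrow\mu_{d_i}$, $\zeta\mapsto\zeta^{k}$, intertwines the two actions because $\zeta^{ka_{ij}}=(\zeta^{k})^{a_{ij}}$; its kernel acts trivially on $\C^n$, so modding it out recovers $X(\bd;A)$ via the identity.

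The substantial case is (6). Write $A'$ for the matrix obtained from $A$ by replacing the last column $\ba_n$ by $e\ba_n$, and set $f\colon\C^n\to\C^n$, $f(x_1,\dots,x_n)=(x_1,\dots,x_{n-1},x_n^{e})$. From the identity $(\bxi_\bd^{\ba_n}x_n)^{e}=\bxi_\bd^{e\ba_n}x_n^{e}$ one sees that $f$ carries the action of type $(\bd;A)$ to that of type $(\bd;A')$, hence descends to a morphism $\bar f\colon X(\bd;A)\to X(\bd;A')$. It is surjective because $f$ already is (extract an $e$-th root in the last coordinate). For injectivity, suppose $f(\mathbf{x}')=\bxi_\bd\cdot f(\mathbf{x})$ in the $(\bd;A')$-action for some $\bxi_\bd\in\mu_\bd$; comparing coordinates, $x'_j=\bxi_\bd^{\ba_j}x_j$ for $j<n$, while $(x'_n)^{e}=(\bxi_\bd^{\ba_n}x_n)^{e}$ gives $x'_n=\omega\,\bxi_\bd^{\ba_n}x_n$ for some $\omega\in\mu_e$. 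Now the three hypotheses enter exactly here: $e\mid d_1$ lets us view $\mu_e\subset\mu_{d_1}$; $\gcd(e,a_{1n})=1$ makes $\zeta\mapsto\zeta^{a_{1n}}$ a bijection of $\mu_e$, so there is $\zeta\in\mu_e$ with $\zeta^{a_{1n}}=\omega$; and $e\mid a_{1j}$ for $1\le j<n$ forces the element $\boldsymbol{\zeta}_{\bd}:=(\zeta,1,\dots,1)\in\mu_\bd$ to satisfy $\boldsymbol{\zeta}_{\bd}^{\ba_j}=\zeta^{a_{1j}}=1$ for $j<n$ and $\boldsymbol{\zeta}_{\bd}^{\ba_n}=\zeta^{a_{1n}}=\omega$. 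Replacing $\bxi_\bd$ by the product $\bxi_\bd\boldsymbol{\zeta}_{\bd}$ then yields $x'_j=(\bxi_\bd\boldsymbol{\zeta}_{\bd})^{\ba_j}x_j$ for every $j$, i.e.\ $[\mathbf{x}']=[\mathbf{x}]$ in $X(\bd;A)$. So $\bar f$ is a bijective morphism of normal quotient spaces, hence an isomorphism.

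I expect the only genuine obstacle to be this injectivity step in~(6): one has to manufacture the correcting element $\boldsymbol{\zeta}_{\bd}$, and it is precisely the combination $e\mid d_1$, $e\mid a_{1j}$ for $j<n$, and $\gcd(e,a_{1n})=1$ that makes it possible; with weaker divisibility, or with a column other than the last, the construction breaks down, so these hypotheses are not cosmetic. A minor secondary point is the standard passage from ``bijective morphism'' to ``isomorphism'' of analytic spaces, which one settles either by the normality argument above or by noting that $\bar f$ has degree $1$ on quotients and invoking Zariski's Main Theorem.
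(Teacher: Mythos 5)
Your proof is correct, and it follows the route the paper intends: the paper offers no written proof (it merely asserts the lemma ``follows easily'' from Remark~\ref{operation_XdA} and Example~\ref{Ex_quo_dim1}), and your verifications --- identity or coordinate maps intertwining actions with the same image in $GL(n,\C)$ for (1)--(5) and (7), plus the $n$-dimensional analogue of the $(*)$ computation of Example~\ref{Ex_quo_dim1} for (6), with the correcting element $\boldsymbol{\zeta}_\bd\in\mu_e\subset\mu_{d_1}$ built exactly from the three hypotheses --- are precisely the omitted details. The only cosmetic remarks are that in (5) the modulus should be read as $d_i$ (the row modulus), as you did, and that in (6) the case $x_n=0$ is covered since one may take $\omega=1$ there.
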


\begin{lemma}\label{upper_triangular}
The space $X(\bd;A) = \C^n / \mu_\bd$ can always be represented by an upper
triangular matrix of dimension $(n-1) \times n$. More precisely, there exist a
vector $\be := {}^t(e_1,\ldots, e_{n-1})$, a~matrix $B = (b_{i,j})_{i,j}$, and an
isomorphism $[(x_1,\ldots,x_n)] \mapsto [(x_1,\ldots, x_n^{k})]$ for some
$k\in\N$ such that 
$$
X(\bd; A) \cong \left(\begin{array}{c|cccc}
e_1 & b_{1,1} & \cdots & b_{1,n-1} & b_{1,n} \\
\vdots & \vdots & \ddots & \vdots & \vdots\\
e_{n-1} & 0 & \cdots & b_{n-1,n-1} & b_{n-1,n}
\end{array}\right) = X(\be;B). 
$$
\end{lemma}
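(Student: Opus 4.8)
The plan is a constructive reduction of $(\bd;A)$ by the moves of Lemma~\ref{movs-dA}, together with the obvious ``divide a row by the $\gcd$ of its entries'', which is licit by the computation in Example~\ref{Ex_quo_dim1}. The key preliminary is that these moves already allow arbitrary $\Z$-linear row operations even between rows attached to different cyclic groups: given rows $i$ and $i'$ with moduli $d_i,d_{i'}$, first rescale the $i$-th by $\lcm(d_i,d_{i'})/d_i$ and the $i'$-th by $\lcm(d_i,d_{i'})/d_{i'}$ via~(3), so that both now carry the modulus $\lcm(d_i,d_{i'})$ and the row operations of Remark~\ref{operation_XdA} become available; afterwards one shrinks moduli back down by dividing out $\gcd$'s. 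Since row swaps are~(2) and sign changes are instances of~(4), in effect I may run the Euclidean algorithm on any column within any block of rows, and every move so far is the identity on points.

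With this in hand, the algorithm runs as follows. For $j=1,\dots,n-1$ in turn, look at the not-yet-finalised rows and at their $j$-th entries; if these all vanish, move on to $j+1$, otherwise use the tool above to concentrate the whole column into a single row, keep its modulus, move it (by~(2)) to the first free position, and declare it finalised. An easy induction shows the not-yet-finalised rows always have zero entries in columns $1,\dots,j-1$, so the $p\le n-1$ finalised rows end up in positions $1,\dots,p$ with the leading nonzero entry of the one in position $i$ lying in a column $\ge i$; in particular they are upper triangular in the required sense. Every remaining row is now supported in the last column only, hence these rows generate a finite cyclic group inside the $x_n$-axis; normalising each of them to the form $(m;0,\dots,0,1)$ (a $\gcd$-division followed by~(4)), merging them in pairs by the tool above, and discarding trivial rows by~(7), they collapse to a single row $(M;0,\dots,0,1)$ (with $M=1$ if there were none), exactly as in the one-dimensional Example~\ref{Ex_quo_dim1}.

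It remains to fit everything into an $(n-1)\times n$ array. If $p<n-1$ there is room to spare: keep $(M;0,\dots,0,1)$ as one of the lower rows — it is vacuously upper triangular — and fill the remaining slots with trivial rows $(1;0,\dots,0)$, reversing~(7). If $p=n-1$ the staircase already occupies all $n-1$ rows, so when $M>1$ the extra row $(M;0,\dots,0,1)$ must be removed: bring it to the top by~(2) and apply~(6) with $e=M$ (legitimate because $\gcd(M,1)=1$, $M\mid M$ and $M\mid 0$); this multiplies the last column of every row by $M$ — harmless for the staircase, which lives in columns $1,\dots,n-1$ — turns the top row into $(M;0,\dots,0,M)$, and is accompanied by the substitution $x_n\mapsto x_n^M$, after which a $\gcd$-division and~(7) delete it (when $M=1$ the extra row is already trivial and is dropped directly by~(7)). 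Either way we are left with exactly $n-1$ rows of the asserted shape, and since every move other than the at most one use of~(6) is the identity on points, the resulting isomorphism is $[(x_1,\dots,x_n)]\mapsto[(x_1,\dots,x_n^k)]$ with $k=M$ (or $k=1$).

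The main obstacle is the preliminary step: Lemma~\ref{movs-dA} does not directly sanction combining rows belonging to different $\mu_{d_i}$, and one must notice that inflating to a common modulus with~(3), then deflating with $\gcd$-divisions, makes Remark~\ref{operation_XdA} usable without ever changing $X(\bd;A)$. Granting that, the remainder is bookkeeping: checking that the elimination in column $j$ never disturbs previously finalised rows (it works inside the unfinalised block only), and that the single use of~(6) preserves triangularity because it rescales only the last column.
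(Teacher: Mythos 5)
Your argument is correct and is essentially the approach the paper intends: the paper prints no proof, asserting only that the lemma ``follows easily'' from Remark~\ref{operation_XdA} and Example~\ref{Ex_quo_dim1}, and your reduction by the moves of Lemma~\ref{movs-dA} supplies exactly the omitted details. In particular, your key preliminary observation --- inflating rows to a common modulus via move (3) so that the row operations of Remark~\ref{operation_XdA} become available across different cyclic factors, then deflating by dividing out common factors --- together with the single use of move (6) (and the substitution $x_n\mapsto x_n^{k}$) to eliminate a leftover row supported on the last coordinate, is precisely the mechanism the paper's cited remark and example are meant to provide.
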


\begin{nothing}\label{reduce_free}
The action showed in the equation~(\ref{action_XdA}) of \ref{notation_action} 
is free on $(\C^{*})^n$, 
i.e~$\big[ \mathbf{x} \in (\C^{*})^n, \ 
\bxi_\bd \cdot \mathbf{x} = \mathbf{x} \big] \Longrightarrow \,\bxi_\bd= 1$,
if and only if the group homomorphism $\mu_\bd \rightarrow GL(n,\C)$ given by
\begin{equation}\label{map_action}
\bxi_\bd = ( \xi_{d_1}, \ldots, \xi_{d_r} ) \longmapsto 
\diag(\bxi_d^{\ba_1}, \dots ,\bxi_d^{\ba_r})
\end{equation}
is injective. If this is not the case, let $H$ be the kernel of this group
homomorphism. Then $\C^n / H \equiv \C^n$ and the group $\mu_d / H$ acts freely
on $(\C^{*})^n$ under the previous identification. Thus one can always assume
that the free and the small conditions are satisfied. This motivates the
following definition.
\end{nothing}

\begin{defi}\label{def_normalized_XdA}
The type $(\bd;A)$ is said to be {\em normalized} if the following two conditions
hold.
\begin{enumerate}
\item The action is free on $(\C^{*})^n$.
\item The group~$\mu_\bd$ is identified with a small subgroup of $GL(n,\C)$ under
the group homomorphism given in the equation~(\ref{map_action}) of \ref{reduce_free}.
\end{enumerate}

By abuse of language we often say the space $X(\bd;A)$ is written in a normalized
form when we mean the type $(\bd;A)$ is normalized.
\end{defi}

\begin{prop}\label{prop-stab}
The space $X(\bd;A)$ is written in a normalized form if and only if the stabilizer
subgroup of $P$ is trivial for all~$P \in \C^n$ with exactly $n-1$ coordinates
different from zero.

In the cyclic case the stabilizer of a point as above (with exactly $\,n-1$
coordinates different from zero) has order $\gcd(d, a_1, \ldots, \widehat{a}_i,
\ldots, a_n)$.
\end{prop}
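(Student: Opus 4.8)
The plan is to rephrase both sides of the equivalence as statements about the diagonal matrices $\diag(\bxi_\bd^{\ba_1},\dots,\bxi_\bd^{\ba_n})$ occurring in the homomorphism~(\ref{map_action}), and then to observe that these two statements are really the same one seen from opposite directions. The key computational observation, which is routine, is the following: if $P\in\C^n$ has exactly $n-1$ coordinates different from zero --- say, after relabelling the coordinates, the vanishing one is $x_i$ --- then $\bxi_\bd\in\mu_\bd$ fixes $P$ if and only if $\bxi_\bd^{\ba_j}=1$ for every $j\neq i$, equivalently, the image of $\bxi_\bd$ under~(\ref{map_action}) is $\diag(1,\dots,1,\bxi_\bd^{\ba_i},1,\dots,1)$, a rotation around the hyperplane $\{x_i=0\}$ (which reduces to the identity precisely when $\bxi_\bd^{\ba_i}=1$ as well).

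For the implication ``normalized $\Rightarrow$ all such stabilizers trivial'', I would take $\bxi_\bd$ in the stabilizer of $P$; by the observation above its image is $\diag(1,\dots,1,\bxi_\bd^{\ba_i},1,\dots,1)$. If $\bxi_\bd^{\ba_i}\neq1$, this matrix has $1$ as eigenvalue of multiplicity precisely $n-1$, contradicting the smallness condition of a normalized type; hence $\bxi_\bd^{\ba_i}=1$, the image is the identity, and injectivity of~(\ref{map_action}) forces $\bxi_\bd=1$. For the converse, assume every such stabilizer is trivial. Injectivity of~(\ref{map_action}) holds because any $\bxi_\bd$ in the kernel satisfies $\bxi_\bd^{\ba_j}=1$ for all $j$, hence fixes, say, $P=(1,\dots,1,0)$, so $\bxi_\bd=1$. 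Smallness of the image then holds because a hypothetical non-identity rotation in the image is, as above, of the form $\diag(1,\dots,1,\bxi_\bd^{\ba_i},1,\dots,1)$ with $\bxi_\bd^{\ba_i}\neq1$, and --- using the injectivity just proved --- it comes from a non-identity $\bxi_\bd\in\mu_\bd$ fixing the corresponding $P$, which is impossible.

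For the last assertion take $r=1$, so $\mu_\bd=\mu_d$ and $\ba_j=a_j$, and fix $P$ with $x_i=0$ and the remaining coordinates nonzero. An element $\xi\in\mu_d$ fixes $P$ if and only if $\xi^{a_j}=1$ for all $j\neq i$, i.e.~if and only if $\ord(\xi)$ divides $a_j$ for each $j\neq i$; since $\ord(\xi)$ also divides $d$, this is equivalent to $\ord(\xi)$ dividing $g:=\gcd(d,a_1,\dots,\widehat{a}_i,\dots,a_n)$. As $g\mid d$ and $\mu_d$ is cyclic, the elements $\xi$ with $\ord(\xi)\mid g$ form the unique subgroup of order $g$, so the stabilizer of $P$ has order $g$, as claimed.

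I do not expect a genuine obstacle here; the single point requiring care is keeping track of the distinction between an element $\bxi_\bd\in\mu_\bd$ and its image in $GL(n,\C)$ under~(\ref{map_action}). This is why, in the converse direction, I would establish injectivity first and only afterwards invoke smallness, so that ``element of $\mu_\bd$'' and ``element of the image'' may be used interchangeably. It is also worth stressing that the hypothesis of having \emph{exactly} $n-1$ nonzero coordinates matches precisely the ``multiplicity precisely $n-1$'' in the definition of a small group, which is exactly what makes the equivalence hold on the nose.
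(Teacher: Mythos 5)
Your proof is correct: the pointwise computation showing that the stabilizer of a point with exactly $n-1$ nonzero coordinates maps onto diagonal matrices of the form $\diag(1,\dots,1,\bxi_\bd^{\ba_i},1,\dots,1)$ ties the smallness condition to these stabilizers exactly as intended, and the injectivity half matches the equivalence already recorded in \ref{reduce_free}; the cyclic-case order count via $\ord(\xi)\mid\gcd(d,a_1,\dots,\widehat{a}_i,\dots,a_n)$ is also right. The paper states this proposition without proof, and your argument is precisely the natural one it presupposes, so nothing further is needed.
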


Using Lemma~\ref{movs-dA} is possible
to convert general types~$(\bd;A)$ into their normalized form.
Theorem~\ref{th_Prill} allows one to decide whether two quotient spaces are
isomorphic. In particular one can use this result to compute the singular points
of the space $X(\bd;A)$ and Proposition~\ref{prop-stab} to decide if a given cyclic
type is normalized.

In Example~\ref{Ex_quo_dim1} we explained the normalization process
in dimension one. In the following examples, we discuss the previous normalization process in
dimensions two and three separately.

\begin{ex}\label{X2} Following Lemma~\ref{upper_triangular}, all
quotient spaces for $n=2$ are cyclic. The space $X(d;a,b)$ is written in a
normalized form if and only if $\gcd(d,a) = \gcd(d,b) = 1$. If this is not the
case, one uses the isomorphism\footnote{Recall the notation $(i_1,\ldots,i_k) =
\gcd(i_1,\ldots,i_k)$ in case of complicated or long formulas.} (assuming
$\gcd(d,a,b)=1$)
$$
\begin{array}{rcl}
X(d; a,b)  & \longrightarrow & X \left( \frac{d}{(d,a)(d,b)}; \frac{a}{(d,a)},
\frac{b}{(d,b)} \right), \\[0.3cm]
\big[ (x,y) \big] & \mapsto & \big[ (x^{(d,b)},y^{(d,a)}) \big]
\end{array}
$$
to convert it into a normalized one, see also Lemma~\ref{movs-dA}.
\end{ex}

\begin{ex}\label{Ex_dim3} The quotient space $X(d;a,b,c)$ is
written in a normalized form if and only if $\gcd(d,a,b) = \gcd(d,a,c) =
\gcd(d,b,c) = 1$. As above, isomorphisms of the form $[(x,y,z)] \mapsto
[(x,y,z^k)]$ can be used to convert types $(d;a,b,c)$ into their normalized
form.
\end{ex}

In~\cite{Fujiki74} Fujiki computes resolutions of these cyclic quotient
singularities and also studies, among others, the previous properties of these
spaces.

\section{Weighted Projective Spaces}\label{weighted_projective_space}

Weighted projective spaces are main examples of $V$-varieties.
The main reference that has been used in this section is \cite{Dolgachev82}.
Here we concentrate our attention on describing the analytic structure
and singularities.

Let $\w=:(q_0,\ldots,q_n)$ be a weight vector, that is, a finite set of coprime positive
integers. 
There is a natural action of the multiplicative group $\C^{*}$ on
$\C^{n+1}\setminus\{0\}$ given by
$$
  (x_0,\ldots,x_n) \longmapsto (t^{q_0} x_0,\ldots,t^{q_n} x_n).
$$
The set of orbits $\frac{\C^{n+1}\setminus\{0\}}{\C^{*}}$ 
under this action is denoted by $\bP^n_\w$ (or $\bP^n(\w)$ in case of complicated weight vectors) 
and is called the {\em weighted projective space} of type $\w$. 
The class of a nonzero element $(x_0,\ldots,x_n)\in \C^{n+1}$ 
is denoted by $[x_0:\ldots:x_n]_\w$ and the weight vector is omitted depending on the context.
When $(q_0,\ldots,q_n)=(1,\ldots,1)$ one obtains the usual projective space
and the weight vector is always omitted. For $\mathbf{x}\in\C^{n+1}\setminus\{0\}$,
the closure of $[\mathbf{x}]_\w$ in $\C^{n+1}$ is obtained by adding the origin and it
is an algebraic curve.

\begin{nothing}\label{analytic_struc_Pkw}\textbf{Analytic structure.} As in the
classical case, weighted projective spaces can be endowed with an analytic
structure. However, in general they contain cyclic quotient singularities. To
understand this structure, consider the decomposition
$
\bP^n_\w = U_0 \cup \cdots \cup U_n,
$
where $U_i$ is the open set consisting of all elements $[x_0:\ldots:x_n]_\w$
with $x_i\neq 0$. The map
$$
  \widetilde{\psi}_0: \C^n \longrightarrow U_0,\quad
\widetilde{\psi}_0(x_1,\cdots,x_n):= [1:x_1:\ldots:x_n]_\w
$$
is clearly a surjective analytic map but it is not a chart since injectivity
fails. In fact, $[1:x_1:\ldots:x_n]_\w = [1:x'_1:\ldots,x'_n]_\w$
if and only if there exists $\xi\in\mu_{q_0}$ such that $x'_i = \xi^{q_i} x_i$
for all $i=1,\ldots,n$. Hence the map above induces the isomorphism
\begin{eqnarray*}
\psi_0\ :\ X(q_0;\, q_1,\ldots,q_n) & \longrightarrow & \ U_0,\\
\,[(x_1,\ldots,x_n)]\ & \mapsto & [1:x_1:\ldots:x_n]_\w.
\end{eqnarray*}
Analogously, $X(q_i;\,q_0,\ldots,\widehat{q}_i,\ldots,q_n) \cong U_i$
under the obvious analytic map. Since the \emph{changes of charts} are analytic,
$\bP^n_\w$~is an analytic space with
cyclic quotient singularities as claimed.
\end{nothing}

\begin{nothing}\textbf{Simplifying the weights.} For different weight vectors $\w$ and
$\w'$ the corresponding spaces
$\bP^n_\w$ and $\bP^n_{\w'}$ can be isomorphic. Consider
\begin{align*}
d_i & := \gcd (q_0,\ldots,\widehat{q}_i,\ldots,q_n),\\
e_i & := \lcm (d_0,\ldots,\widehat{d}_i,\ldots,d_n).
\end{align*}
Note that $e_i|q_i$,\, $\gcd(d_i,d_j)=1$ for $i\neq j$, and $\gcd(e_i,d_i)=1$.
\end{nothing}

\begin{prop}\label{propPw}
Using the notation above, the following map is an isomorphism:
$$
\begin{array}{rcl}
\bP^n \big(q_0,\ldots,q_n\big) & \longrightarrow & \bP^n
\displaystyle\big(\textstyle\frac{q_0}{e_0},\ldots,\frac{q_n}{e_n}\big),
\\[0.2cm]
\,[x_0:\ldots:x_n] & \mapsto &
\big[\,x_0^{d_0}:\ldots:x_n^{d_n}\,\big].
\end{array}
$$
\end{prop}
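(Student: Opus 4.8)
The plan is to show that the stated map, which I denote $\varphi$, is a well-defined, proper, holomorphic bijection, and then to upgrade it to a biholomorphism using normality. Throughout put $D:=d_0\cdots d_n$; since $d_0,\dots,d_n$ are pairwise coprime we have $e_i=\lcm_{j\neq i}d_j=\prod_{j\neq i}d_j$, hence $e_id_i=D$ for every $i$. That $\varphi$ is well defined follows from the identity $(t^{q_i}x_i)^{d_i}=t^{q_id_i}x_i^{d_i}=(t^{D})^{q_i/e_i}x_i^{d_i}$: replacing a representative $(x_0,\dots,x_n)$ by $(t^{q_0}x_0,\dots,t^{q_n}x_n)$ replaces the image representative by its rescaling with $\C^{*}$-parameter $t^{D}$, so the image point of $\bP^n(q_0/e_0,\dots,q_n/e_n)$ is unchanged. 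In the charts of~\ref{analytic_struc_Pkw} the map $\varphi$ lifts to $(x_1,\dots,x_n)\mapsto(x_1^{d_1},\dots,x_n^{d_n})$ on $\C^n$ (and similarly for the other charts), so it is holomorphic; it is proper because both spaces are compact. Surjectivity is immediate: given $[z_0:\dots:z_n]$ with some $z_i\neq0$, choose a $d_i$-th root $x_i$ of each $z_i$ (and $x_i=0$ when $z_i=0$); then $(x_0,\dots,x_n)\neq0$ and $\varphi([x_0:\dots:x_n])=[z_0:\dots:z_n]$.

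The real content is injectivity. Assume $\varphi([x_0:\dots:x_n])=\varphi([y_0:\dots:y_n])$, so there is $s\in\C^{*}$ with $y_i^{d_i}=s^{q_i/e_i}x_i^{d_i}$ for all $i$; in particular $y_i=0$ whenever $x_i=0$. Fix $t_0\in\C^{*}$ with $t_0^{D}=s$. For an index $i$ with $x_i\neq0$ set $u_i:=y_i/x_i$ and $\omega_i:=u_i/t_0^{q_i}$; then $\omega_i^{d_i}=u_i^{d_i}/t_0^{q_id_i}=s^{q_i/e_i}/(t_0^{D})^{q_i/e_i}=1$, so $\omega_i\in\mu_{d_i}$, and for the remaining indices set $\omega_i:=1\in\mu_{d_i}$. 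Choose $c_0,\dots,c_n\in\Z$ with $\sum_ic_iq_i=1$ (possible since $\gcd(q_0,\dots,q_n)=1$), and put $\nu:=\prod_i\omega_i^{c_i}$ and $t:=t_0\nu$. I claim $y_j=t^{q_j}x_j$ for all $j$, which yields $[x_0:\dots:x_n]=[y_0:\dots:y_n]$. This is trivial when $x_j=0$; when $x_j\neq0$ it reduces to $\nu^{q_j}=\omega_j$. Now $\nu^{q_j}=\omega_j^{c_jq_j}\prod_{i\neq j}\omega_i^{q_jc_i}$, and $\omega_i^{q_j}=1$ for $i\neq j$ because $d_i\mid q_j$; moreover $d_j\mid q_i$ for $i\neq j$, so $c_jq_j=1-\sum_{i\neq j}c_iq_i\equiv1\pmod{d_j}$ and hence $\omega_j^{c_jq_j}=\omega_j$ as $\omega_j\in\mu_{d_j}$. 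Therefore $\nu^{q_j}=\omega_j$ and $t^{q_j}x_j=t_0^{q_j}\omega_jx_j=u_jx_j=y_j$.

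Thus $\varphi$ is a proper holomorphic bijection between the normal complex spaces $\bP^n(q_0,\dots,q_n)$ and $\bP^n(q_0/e_0,\dots,q_n/e_n)$ (normal because they are $V$-manifolds), hence a biholomorphism. Equivalently, one can make the inverse explicit by factoring $\varphi$ as a composition of $n+1$ elementary isomorphisms, the $i$-th of which raises the $i$-th homogeneous coordinate to the power $d_i$ and divides every other weight by $d_i$; their evident candidate inverses take a $d_i$-th root in that coordinate, and the fact that these successive reductions do not interfere — and that their composite is exactly $\varphi$ — once more rests on $\gcd(d_i,d_j)=1$. I expect the single delicate point to be the injectivity argument above, and within it the observation that each ambiguity $\omega_i$ is forced into the small group $\mu_{d_i}$ rather than merely $\mu_D$: that is precisely what makes the B\'ezout combination $\nu=\prod_i\omega_i^{c_i}$ work, and it would genuinely fail for arbitrary $D$-th roots of unity.
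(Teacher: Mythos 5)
Your proof is correct, but it follows a genuinely different route from the paper's. The paper argues chart by chart: using the operations of Lemma~\ref{movs-dA} it factors the map, on each local model $X(q_i;\,q_0,\ldots,\widehat{q}_i,\ldots,q_n)$, into $n$ elementary isomorphisms (each dividing all but one weight by some $d_j$ at the cost of raising one coordinate to the power $d_j$), so the restriction of the global map to every chart is exhibited as a composition of explicit isomorphisms of quotient types, with an explicit inverse; this is exactly the factorization you only sketch in your closing sentences. You instead give a direct global argument: well-definedness from $e_id_i=D$, holomorphy in the charts, surjectivity by extracting $d_i$-th roots, and then the key point, injectivity, via the observation that each ambiguity $\omega_i=y_i/(t_0^{q_i}x_i)$ lies in $\mu_{d_i}$ (not merely $\mu_D$), so a B\'ezout combination $\nu=\prod_i\omega_i^{c_i}$ with $\sum_ic_iq_i=1$ corrects $t_0$ simultaneously in all coordinates, using $d_i\mid q_j$ for $i\neq j$; finally you upgrade the proper holomorphic bijection to a biholomorphism via normality (Riemann-type extension for the inverse). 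Both arguments are sound; what they buy is different. The paper's proof is constructive and stays within the elementary toolkit of type manipulations it has just set up (and hands you the inverse explicitly), but it is terse about why the chart-level isomorphisms assemble to the stated global map. Your proof is more self-contained at the level of points and makes the global bijectivity completely transparent, at the price of invoking a nontrivial general theorem (bijective proper holomorphic map onto a normal complex space is biholomorphic) that the paper never needs; if you wanted to avoid that input, carrying out the factorization you mention at the end would reduce your argument essentially to the paper's.
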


\begin{proof}
Since $\gcd(q_i,d_i)=1$ we have $e_i = d_0\cdot \ldots\cdot
\widehat{d}_i
\cdot \ldots\cdot d_n$.
 Now from Lemma~\ref{movs-dA} one has the following sequence
of isomorphisms
of analytic spaces:
\begin{equation*}
\begin{split}
& X(q_0;\, q_1, \ldots,q_n)\stackrel{\text{id}}{=}
\textstyle X(q_0;\,
\frac{q_1}{d_0},\frac{q_2}{d_0},\ldots,\frac{q_n}{d_0})\stackrel{1\text{st}}{
\cong}
X(\frac{q_0}{d_1};\, \frac{q_1}{d_0},\frac{q_2}{d_0 d_1}, \ldots,\frac{q_n}{d_0
d_1}) \\[0.10cm]
& \textstyle \stackrel{2\text{nd}}{\cong} X(\frac{q_0}{d_1 d_2};\,
\frac{q_1}{d_0 d_2},\frac{q_2}{d_0 d_1},
\frac{q_3}{d_0 d_1 d_2}, \ldots,\frac{q_n}{d_0 d_1
d_2})\stackrel{\text{3rd}}{\cong} \cdots
\stackrel{n\text{th}}{\cong} X(\frac{q_0}{e_0};\,
\frac{q_1}{e_1},\ldots,\frac{q_n}{e_n})
\end{split}
\end{equation*}
Observe that in the $i$-th step, we divide the corresponding weight vector by
$d_i$ except the $i$-th coordinate and hence the associated map is
$$[(x_1,\ldots,x_i,\ldots,x_n)] \longmapsto
[(x_1,\ldots,x_i^{d_i},\ldots,x_n)].$$
Therefore
$
[1:x_1:\ldots:x_n]_\w \longmapsto [1:x_1^{d_1}:\ldots:x_n^{d_n}]_{\w'}
$
is an isomorphism by composition. Analogously one proceeds with the other
charts.
\end{proof}

\begin{remark}
Note that, due to the preceding Proposition, one can always assume the weight
vector satisfies $\gcd(q_0,\ldots,\widehat{q}_i,\ldots,q_n)=1$, for
$i=0,\ldots,n$. In particular, $\bP^1{(q_0,q_1)} \cong \bP^1$ and for $n=2$ we can
take $(q_0,q_1,q_2)$ relatively prime numbers. In higher dimension the situation
is a bit more complicated.
\end{remark}

\section{Weighted Blow-ups and Embedded $\Q$-Resolutions}\label{sec-wbr}

Classically an embedded resolution of $\{f=0\} \subset \C^n$ is a proper map
$\pi: X \to (\C^n,0)$ from a smooth variety $X$ satisfying, among other
conditions, that $\pi^{-1}(\{f=0\})$ is a normal crossing divisor. To weaken the
condition on the preimage of the singularity we allow the new ambient space $X$
to contain abelian quotient singularities and the divisor $\pi^{-1}(\{f=0\})$ to
have \emph{normal crossings} over this kind of varieties. This notion of normal
crossing divisor on $V$-manifolds was first introduced by Steenbrink
in~\cite{Steenbrink77}.

\begin{defi}
Let $X$ be a $V$-manifold with abelian quotient singularities. A hypersurface
$D$ on $X$ is said to be with {\em $\mathbb{Q}$-normal crossings} if it is
locally isomorphic to the quotient of a union of coordinate hyperplanes under a group
action of type $(\bd;A)$. That is, given $x \in X$, there is an isomorphism of
germs $(X,x) \simeq (X(\bd;A), [0])$ such that $(D,x) \subset (X,x)$ is identified
under this morphism with a germ of the form
$$
\big( \{ [\mathbf{x}] \in X(\bd;A) \mid x_1^{m_1} \cdot\ldots\cdot x_k^{m_k} = 0 \},
[(0,\ldots,0)] \big).
$$
\end{defi}

Let $M = \C^{n+1} / \mu_\bd$ be an abelian quotient space not necessarily cyclic
or written in normalized form.
Consider $H \subset M$ an analytic subvariety of codimension one.

\begin{defi}\label{Qresolution}
An {\em embedded $\Q$-resolution} of $(H,0) \subset (M,0)$ is a proper analytic
map~$\pi: X \to (M,0)$ such that:
\begin{enumerate}
\item $X$ is a $V$-manifold with abelian quotient singularities.
\item $\pi$ is an isomorphism over $X\setminus \pi^{-1}(\Sing(H))$.
\item $\pi^{-1}(H)$ is a hypersurface with $\mathbb{Q}$-normal crossings on $X$.
\end{enumerate}
\end{defi}

\begin{remark}
Let $f:(M,0) \to (\C,0)$ be a non-constant analytic function germ. Consider
$(H,0)$ the hypersurface defined by $f$ on $(M,0)$. Let $\pi: X \to (M,0)$ be an
embedded $\Q$-resolution of $(H,0) \subset (M,0)$. Then $\pi^{-1}(H) = (f\circ
\pi)^{-1}(0)$ is locally given by a function of the form
$
x_1^{m_1}  \cdot\ldots\cdot x_k^{m_k} : X(\bd;A) \rightarrow \C.
$
for some type $(\bd;A)$.
\end{remark}

\begin{nothing}\label{blowup_dimN_smooth}\textbf{Classical blow-up of $\C^{n+1}$.}
Using
multi-index notation
we consider
$$
\widehat{\C}^{n+1} := \big\{ (\mathbf{x},[\mathbf{u}]) \in \C^{n+1} \times \bP^n \mid
\mathbf{x}\in \overline{[\mathbf{u}]} \big\}.
$$

Then $\pi : \widehat{\C}^{n+1} \to \C^{n+1}$ is an isomorphism over
$\widehat{\C}^{n+1} \setminus \pi^{-1}(0)$. The {\em exceptional divisor} $E:=
\pi^{-1}(0)$ is identified with $\bP^{n}$. The space
$\widehat{\C}^{n+1} = U_0 \cup \cdots \cup U_n$ can be covered with $n+1$
charts each of them isomorphic to $\C^{n+1}$. For instance, the following map
defines an isomorphism:
\begin{eqnarray*}
\C^{n+1} & \longrightarrow & U_0 = \{ u_0 \neq 0 \} \ \subset \
\widehat{\C}^{n+1},\\
\mathbf{x} \ & \mapsto & \big( (x_0, x_0 x_1, \ldots, x_0 x_n),[1: x_1 : \ldots : x_n] \big).
\end{eqnarray*}
\end{nothing}

\begin{nothing}\label{weighted_blowup_dimN_smooth}\textbf{Weighted
$(p_0,\ldots,p_n)$-blow-up of $\C^{n+1}$.} 
Let~$\w = (p_0, \ldots, p_n)$ be a weight
vector. As above, consider the space
$$
\widehat{\C}^{n+1}(\w) := \big\{ (\mathbf{x},[\mathbf{u}]_{\w}) \in \C^{n+1} \times
\bP^n_{\w} \mid \mathbf{x} \in \overline{[\mathbf{u}]}_{\w}
\big\}.
$$
Here the condition about the closure means that
$
\exists t \in \C, \quad x_i = t^{p_i} u_i, \quad i=0,\ldots,n.
$
Then the natural projection $\pi : \widehat{\C}^{n+1}(\w) \to \C^{n+1}$ is an
isomorphism over $\widehat{\C}^{n+1}(\w) \setminus \pi^{-1}(0)$ and the
exceptional divisor $E:= \pi^{-1}(0)$ is identified with~$\bP^n_{\w}$. 
Again the space 
$
\widehat{\C}^{n+1}(\w) = U_0 \cup \cdots \cup
U_n
$ 
can be covered with $n+1$ charts. However, $\varphi_0 : \C^{n+1} \to U_0$
given by
\begin{eqnarray*}
\C^{n+1} & \stackrel{\varphi_0}{\longrightarrow} & U_0 = \{ u_0 \neq 0 \} \
\subset \ \widehat{\C}^{n+1}(\w),\\
\mathbf{x} \ & \mapsto & \big( (x_0^{p_0}, x_0^{p_1} x_1, \ldots, x_0^{p_n} x_n),
[1: x_1 : \ldots : x_n]_{\w} \big),
\end{eqnarray*}
is surjective but not injective. In fact 
$\varphi_0(\mathbf{x}) = \varphi_0({\bf y})$ if and only if
$$
\exists \xi \in \mu_{p_0}: 
\begin{cases}
 y_0 = \xi^{-1} x_0, \\ y_i = \xi^{p_i}
x_i, & i = 1, \ldots,  n. 
\end{cases}
$$
Hence the previous map $\varphi_0$ induces an isomorphism
$$
X(p_0; -1, p_1, \ldots, p_n)  \longrightarrow U_0.
$$
Note that these charts are compatible with the ones described
in~\ref{analytic_struc_Pkw} for the weighted projective space. In $U_0$ the
exceptional divisor is $\{x_0=0\}$ and the first chart of $\bP^n_{\w}$ is the
quotient space $X(p_0;\, p_1, \ldots, p_n)$.
\end{nothing}

\begin{nothing}\textbf{Weighted $\w$-blow-up of $X(\bd; A)$.}
The action $\mu_\bd$ on $\C^{n+1}$ extends
naturally to an action on~$\widehat{\C}^{n+1}(\w)$ as follows,
$$
\bxi_\bd \cdot \big( \mathbf{x}, [\mathbf{u}]_{\w} \big) \stackrel{\mu_d}{\longmapsto}
\left( ( \bxi_\bd^{\ba_0} x_0,\ldots, \bxi_\bd^{\ba_n} x_n), [\bxi_\bd^{\ba_0} u_0: \ldots:
\bxi_\bd^{\ba_n} u_n]_{\w} \right).
$$

Let $\widehat{X(\bd;A)}(\w) := \widehat{\C}^{n+1}(\w) \big/ \mu_\bd$ denote the
quotient space under this action. Then the induced projection
$$
\pi: \widehat{X(\bd;A)}(\w) \longrightarrow X(\bd;A), \quad \big[ (\mathbf{x}, [{\bf
u}]_{\w}) \big]_{(\bd;A)} \mapsto [\mathbf{x}]_{(\bd;A)}
$$
is an isomorphism over $\widehat{X(\bd;A)}(\w) \setminus \pi^{-1}([0])$ and the
exceptional divisor $E:=\pi^{-1}([0])$ is identified with $\bP^n_{\w}/ \mu_\bd$.

The action $\mu_\bd$ above respects the charts of $\widehat{\C}^{n+1}(\w)$ so
that the new ambient space can be covered as
$$
\widehat{X(\bd;A)}(\w) = \widehat{U}_0 \cup \cdots \cup \widehat{U}_n,
$$
where $\widehat{U}_i := U_i / \mu_\bd = \{ u_i \neq 0 \}$.

Let us study for instance the first chart.
By using $\varphi_0$ one identifies~$U_0$ with 
$$
X(p_0;-1,p_1,\ldots,p_n).
$$ 
Let us consider the space $X(p_0 \bd;(\ba_0\vert p_0 \ba_1 - p_1 \ba_0\vert\dots\vert p_0 \ba_n - p_n \ba_0))$.
Since the following diagram commutes:
$$
\begin{matrix}
(x_0,x_1,\dots,x_n)&\mapsto &
(\bxi_{p_0 \bd}^{\ba_0} x_0, \bxi_{p_0 \bd}^{ p_0 \ba_1 - p_1 \ba_0} x_1,\dots,\bxi_{p_0 \bd}^{ p_0 \ba_n - p_n \ba_0} x_n)\\
\downarrow&&\downarrow\\
(x_0^{p_0},x_0^{p_1} x_1,\dots,x_0^{p_n} x_n)&\mapsto &
(\bxi_{p_0 \bd}^{p_0 \ba_0} x_0^{p_0},\bxi_{p_0 \bd}^{p_0 \ba_1} x_0^{p_1} x_1,\dots,\bxi_{p_0 \bd}^{p_0 \ba_n}  x_0^{p_n} x_n)
\end{matrix}
$$
where the vertical arrows are the chart morphisms, the upper arrow corresponds to the action
defining $X(p_0 \bd;(\ba_0\vert p_0 \ba_1 - p_1 \ba_0\vert\dots\vert p_0 \ba_n - p_n \ba_0))$
and the lower arrow to the one defining $X(p_0 \bd;p_0 A)=X(\bd; A)$.
Since the chart is an isomorphism for  $X(p_0;-1,p_1,\ldots,p_n)$
we conclude that
$$
X
\left(
\begin{pmatrix}
p_0\\
\hrulefill\\
p_0 \bd 
\end{pmatrix}
;
\left(
\begin{matrix}
-1\\
\hrulefill\\
\ba_0
\end{matrix}
\Biggm\vert
\begin{matrix}
p_1\\
\hrulefill\\
p_0 \ba_1 - p_1 \ba_0
\end{matrix}
\Biggm\vert
\dots
\Biggm\vert
\begin{matrix}
p_n\\
\hrulefill\\
p_0 \ba_n - p_n \ba_0
\end{matrix}
\right)
\right)
$$
is isomorphic to $\widehat{U}_0$ and the isomorphism is defined by
$$
[\mathbf{x}] \ \stackrel{\widehat{\varphi}_0}{\longmapsto} \ \big[\big( (x_0^{p_0},
x_0^{p_1} x_1, \ldots, x_0^{p_n} x_n),[1: x_1 : \ldots :
x_n]_{\w} \big)\big].
$$
Analogously one proceeds for $i=1,\ldots,n$.
As for the the exceptional divisor $E = \pi^{-1}(0) = \bP^n_{\w} / \mu_\bd$, 
it is as usual covered as $\widehat{V}_0 \cup \cdots \cup
\widehat{V}_n$ so that these charts are compatible with the ones of
$\widehat{X(\bd;A)}(\w)$ in the sense that $\widehat{V}_i =
\widehat{U}_i|_{\{x_i = 0\}}$, $i = 0, \ldots, n$. Hence, for example,
$$
\widehat{V}_0  \cong  X \left(\begin{array}{c|cccccc}
p_0 & p_1 & \cdots & p_n  \\
p_0 \bd & p_0 \ba_1 - p_1 \ba_0 & \cdots & p_0 \ba_n -p_n \ba_0 
\end{array}\right).
$$
\end{nothing}

\begin{ex}\label{blow-up2-sing-ab}
We study the weighted blow-up
$$
\pi := \pi_{(d;a,b),\w}: \, \widehat{X(d;a,b)}_{\w}
\longrightarrow X(d;a,b)
$$ 
of the origin of a normalized space $X(d;a,b)$
with respect to $\w = (p,q)$. The new space is covered as
$$
\widehat{U}_1 \cup \widehat{U}_2 = X \left( \begin{array}{c|cc} p & -1 & q \\ pd
& a & pb - qa \end{array} \right) \cup X \left( \begin{array}{c|cc} q & p & -1
\\ qd & qa-pb & b \end{array} \right)
$$
and the charts are given by 
$$
\begin{array}{c|c}
\text{First chart} & X \left( \begin{array}{c|cc} p & -1 & q \\ pd & a & pb - qa
\end{array} \right) \ \longrightarrow \ \widehat{U}_1, \\[0.5cm] & \,\big[ (x,y)
\big] \mapsto \big[ ((x^p,x^q y),[1:y]_{\w}) \big]_{(d;a,b)}. \\
\multicolumn{2}{c}{} \\
\text{Second chart} & X \left( \begin{array}{c|cc} q & p & -1 \\ qd & qa-pb & b
\end{array} \right) \ \longrightarrow \ \widehat{U}_2, \\[0.5cm]
& \big[ (x,y) \big] \mapsto \big[ ((x y^p, y^q),[x:1]_{\w}) \big]_{(d;a,b)}.
\end{array}
$$
The exceptional divisor $E = \pi_{(d;a,b),\w}^{-1}(0)$ is identified with the
quotient space $\bP^1_{\w}(d;a,b) := \bP^1_{\w}/\mu_{d}$ which is isomorphic to
$\bP^1$ under the map
$$
\begin{array}{rcl}
\bP^1_{\w}(d;a,b) & \longrightarrow & \bP^1, \\[0.1cm]
\,[x:y]_{\w} & \mapsto & [x^{dq/e}: y^{dp/e}],
\end{array}
$$
where $e: = \gcd(d,pb-qa)$. Again the singular points are cyclic and
correspond to the origins. 
To normalize these quotient spaces, note
that $e =\gcd(d,pb-qa)= \gcd(d, -q+\beta p b) =\gcd(pd, -q+\beta p b)=\gcd(qd,p-q a\mu)$, where
$\beta a\equiv \mu b\equiv 1\mod d$.

Then another expression for the two charts are given below.
\begin{equation}\label{blowup_normalized_dim2}
\begin{array}{c|c}
\text{First chart} & X \left( \displaystyle\frac{pd}{e}; 1, \frac{-q+\beta p
b}{e} \right)  \ \longrightarrow \ \widehat{U}_1, \\[0.5cm] & \,\big[ (x^e,y)
\big] \mapsto \big[ ((x^p,x^q y),[1:y]_{\w}) \big]_{(d;a,b)}. \\
\multicolumn{2}{c}{} \\
\text{Second chart} & X \left( \displaystyle\frac{qd}{e}; \frac{-p+\mu qa}{e}, 1
\right) \ \longrightarrow \ \widehat{U}_2, \\[0.5cm] & \hspace{0.15cm} \big[
(x,y^e) \big] \mapsto \big[ ((x y^p, y^q),[x:1]_{\w}) \big]_{(d;a,b)}.
\end{array}
\end{equation}

Both quotient spaces are now written in their normalized
form.
\end{ex}

\begin{ex}
Let $\pi := \pi_{\w}: \widehat{\C}^3_{\w} \to \C^3$ the weighted blow-up at the origin with respect to $\w=(p,q,r)$,
$\gcd\w=1$. The new space is covered as
$$
\widehat{\C}^3_{\w} = U_1 \cup U_2 \cup U_3 = X(p;-1,q,r) \cup X(q;p,-1,r) \cup X(r;p,q,-1),
$$
and the charts are given by
\begin{equation}\label{charts_dim3}
\begin{array}{cc}
X(p;-1,q,r) \longrightarrow U_1: & [(x,y,z)] \mapsto ((x^p, x^q y, x^r z),[1:y:z]_{\w}), \\[0.25cm]
X(q;p,-1,r) \longrightarrow U_2: & [(x,y,z)] \mapsto ((x y^p,y^q,y^r z),[x:1:z]_{\w}), \\[0.25cm]
X(r;p,q,-1) \longrightarrow U_3: & [(x,y,z)] \mapsto ((x z^p, y z^q, z^r),[x:y:1]_{\w}).
\end{array}
\end{equation}

In general $\widehat{\C}^3_{\w}$ has three lines of (cyclic quotient) singular
points located at the three axes of the exceptional divisor $\pi^{-1}_{\w}(0) \simeq \bP^2_{\w}$.
Namely, a generic point in $x=0$ is a cyclic point of type
$\C\times X(\gcd(q,r);p,-1)$.

Note that although the quotient spaces are written in their normalized form,
the exceptional divisor can be simplified:
$$
\begin{array}{rcl}
\bP^2(p,q,r) & \longrightarrow & \bP^2 \displaystyle\left(\frac{p}{(p,r)\cdot
(p,q)},\frac{q}{(q,p)\cdot (q,r)},
\frac{r}{(r,p)\cdot (r,q)}\right),\\[0.5cm]
\displaystyle \,[x:y:z] & \mapsto & [x^{\gcd(q,r)}:y^{\gcd(p,r)}:z^{\gcd(p,q)}].
\end{array} 
$$
Using just a weighted blow-up of this kind, one can find an embedded
$\Q$-resolution for Brieskorn-Pham surfaces singularities, i.e.~$x^a + y^b + z^c
= 0$. For simplicity, let us assume $a,b,c$ are pairwise coprime and consider the weight
$\w:=(p,q,r)$ where $p:=b c$, $q:=a c$ and $r=a b$.
The blow-up $\pi_\w$ gives a $\Q$-resolution of this singularity.
The exceptional divisor $E$ is isomorphic to $\bP^2_\w\cong\bP^2$.
The intersection of the strict transform with $E$ is a generic line in $\bP^2$.
Note that the space $\C^3_\w$ has three singular points of type
$(b c;-1, a c,a b)$, $(a c;b c,-1,a b)$ and $(a b;b c,a c,-1)$ and
three strata (isomorphic to $\C^*$) with transversal singularities of type 
$X(a;b c,-1)$, $X(b;a c,-1)$ and $X(c;a b,-1)$.
\end{ex}

\section{Cartier and Weil Divisors on $V$-Manifolds: $\mathbb{Q}$-Divisors}\label{sec-cw}

The aim of this section is to show that when $X$ is a $V$-manifold there is an
isomorphism of $\mathbb{Q}$-vector spaces between Cartier and Weil divisors, see
Theorem~\ref{cartier_weil_XdA} below. It is explained
in~\ref{how_to_write_summarize} how to write explicitly a $\mathbb{Q}$-Weil
divisor as a $\mathbb{Q}$-Cartier divisor. Also, the case of the exceptional
divisor of a weighted blow-up is treated in Example~\ref{how_to_write_example}.

\subsection{Divisors on complex analytic varieties}
\mbox{}

Let $X$ be an irreducible complex analytic variety. As usual, consider~$\cO_X$
the structure sheaf of $X$ and $\mathcal{K}_X$ the sheaf of total quotient rings
of $\cO_X$. Denote by $\mathcal{K}_X^{*}$ the (multiplicative) sheaf of
invertible elements in $\mathcal{K}_X$. Similarly $\cO_X^{*}$ is the sheaf of
invertible elements in~$\cO_X$. 
(resp.~$\mathcal{K}_X$) is the sheaf of holomorphic (resp.~meromorphic)
functions and $\cO_X^{*}$ (resp.~$\mathcal{K}_X^{*}$) is the sheaf of nowhere
vanishing holomorphic (resp.~non-zero meromorphic) functions.

\begin{remark}
By {\em complex analytic variety} we mean a reduced complex space. A {\em
subvariety} $V$ of $X$ is a reduced closed complex subspace of $X$, or
equivalently, an analytic set in~$X$, cf.~\cite{GR84}. An irreducible subvariety
$V$ corresponds to a prime ideal in the ring of sections of any local complex
model space meeting $V$.
\end{remark}

\begin{defi}
A {\em Cartier divisor} on $X$ is a global section of the sheaf
$\mathcal{K}_X^{*}/\cO_X^{*}$, that is, an element in
$\Gamma(X,\mathcal{K}_X^{*}/\cO_X^{*}) = H^0(X,\mathcal{K}_X^{*}/\cO_X^{*}) $.
Any Cartier divisor can be represented by giving an open covering $\{U_i\}_{i\in
I}$ of $X$ and, for all $i\in I$, an element $f_i \in \Gamma (U_i,
\mathcal{K}_X^{*})$ such that
$$
\frac{f_i}{f_j} \in \Gamma(U_i \cap U_j, \cO_X^{*}), \quad \forall i,j \in I.
$$
Two systems $\{(U_i,f_i)\}_{i\in I}$, $\{(V_j,g_j)\}_{j\in J}$ represent the
same Cartier divisor if and only if on $U_i \cap V_j$, $f_i$ and $g_j$ differ by
a multiplicative factor in $\cO_X(U_i\cap V_j)^{*}$.
The abelian group of Cartier divisors on $X$ is denoted by $\CaDiv(X)$. If $D :=
\{(U_i,f_i)\}_{i\in I}$ and $E := \{(V_j,g_j)\}_{j\in J}$ then $D + E = \{(U_i
\cap V_j, f_i g_j)\}_{i\in I, j\in J}$.
\end{defi}

The functions $f_i$ above are called {\em local equations} of the divisor on
$U_i$. A Cartier divisor on $X$ is {\em effective} if it can be represented by
$\{(U_i,f_i)\}_i$ with all local equations $f_i \in \Gamma(U_i,\cO_{X})$.

Any global section~$f \in \Gamma(X,\mathcal{K}^{*}_X)$ determines a {\em
principal} Cartier divisor $(f)_X := \{(X,f)\}$ by taking all local equations
equal to $f$. That is, a Cartier divisor is principal if it is in the image of
the natural map $\Gamma(X,\mathcal{K}_X^{*}) \to
\Gamma(X,\mathcal{K}_X^{*}/\cO_X^{*})$. Two Cartier divisors $D$ and $E$ are {\em
linearly equivalent}, denoted by $D\sim E$, if they differ by a principal
divisor. The {\em Picard group} $\Pic(X)$ denotes the group of linear
equivalence classes of Cartier divisors.

The {\em support} of a Cartier divisor $D$, denoted by $\Supp(D)$ or $|D|$, is the subset of $X$ consisting of all points $x$ such that a local equation for $D$ is not in $\cO_{X,x}^{*}$. The support of $D$ is a closed subset of $X$.

\begin{defi}\label{defi_Weil}
A {\em Weil divisor} on $X$ is a locally finite linear combination with integral
coefficients of irreducible subvarieties of codimension one. The abelian group
of Weil divisors on $X$ is denoted by $\WeDiv(X)$. If all coefficients appearing
in the sum are non-negative, the Weil divisor is called {\em effective}.
\end{defi}

\begin{remark}
In the algebraic category 
the locally finite sum of Definition~\ref{defi_Weil} is
automatically finite. Therefore $\WeDiv(X)$ is the free abelian group on the
codimension one irreducible algebraic subvarieties of $X$. Similar
considerations hold if $X$ is a compact analytic variety.
\end{remark}

Given a Cartier divisor there is a Weil divisor associated with it. To see this,
the notion of order of a divisor along an irreducible subvariety of codimension
one is needed.

\begin{nothing}\label{order_function}\textbf{Order function.} Let $V \subset X$ be an
irreducible subvariety of codimension one. It corresponds to a prime ideal in
the ring of sections of any local complex model space meeting~$V$. The {\em
local ring of $X$ along $V$}, denoted by $\cO_{X,V}$, is the localization of such
ring of sections at the corresponding prime ideal; it is a one-dimensional local
domain.

For a given $f \in \cO_{X,V}$ define $\ord_V(f)$ to be
$$
  \ord_V(f) := \length_{\cO_{X,V}} \left( \frac{\cO_{X,V}}{\langle f \rangle}
\right).
$$
This determines a well-defined group homomorphism $\ord_V:
\Gamma(X,\mathcal{K}_X^{*}) \to \Z$ satisfying, for a given $f \in
\Gamma(X,\mathcal{K}_X^{*})$, the following property: 
$$
  \forall x \in X, \ \exists U_x \subset X
\text{ open neighborhood of }x \, \mid \, \# \{ \ord_V(f) \neq 0
\mid V \cap U_x \neq \emptyset \} < +\infty.
$$

The previous length, $X$ being a complex analytic variety of dimension $n \geq
2$, can be computed as follows. Choose $x \in V$ such that $x$ is smooth in $X$
and $(V,x)$ defines an irreducible germ. This germ is the zero set of an
irreducible $g \in \cO_{X,x}$. Then
$
  \ord_V(f) = \ord_{V,x}(f),
$
where $\ord_{V,x}(f)$ is the classical order of a meromorphic function at a
smooth point with respect to an irreducible subvariety of codimension one; it is
known to be given by the equality $f = g^{\ord} \cdot h \in \cO_{X,x}$ where $h
\nmid g$.
The same applies if $X$ is $1$-dimensional and smooth.
\end{nothing}

\begin{remark}
The order $\ord_{V,x}(f)$ does not depend on the defining equation $g$, as long
as we choose $g$ irreducible. In fact, two irreducible $g, g'\in \cO_{X,x}$ with
$V(g) = V(g')$ only differ by a unit in $\cO_{X,x}$. Moreover, $\ord_{V,x}(f)$
does not depend on $x$, since the set $V_{\text{red}}$ of regular points is
connected if $V$ is irreducible.
\end{remark}

Now if $D$ is a Cartier divisor on $X$, one writes $\ord_V (D) = \ord_V(f_i)$
where $f_i$ is a local equation of $D$ on any open set $U_i$ with $U_i \cap V
\neq \emptyset$. This is well defined since $f_i$ is uniquely determined up to
multiplication by units and the order function is a homomorphism. Define the
{\em associated Weil divisor} of a Cartier divisor~$D$ by  
$$
\begin{array}{cccl}
T_X: & \CaDiv(X) & \longrightarrow & \WeDiv(X), \\[0.15cm]
& D & \longmapsto & \displaystyle \sum_{V \subset X} \ord_V(D) \cdot [V],
\end{array}
$$
where the sum is taken over all codimension one irreducible subvarieties $V$ of
$X$. The previous sum is locally finite, i.e.~for any $x\in X$ there exists an
open neighborhood $U$ such that the set $\{ \ord_V(D) \neq 0 \mid V \cap U \neq
\emptyset \}$ is finite. By the additivity of the order function, the mapping
$T_X$ is a homomorphism of abelian groups.

A Weil divisor is {\em principal} if it is the image of a principal Cartier
divisor under $T_X$; they form a subgroup of $\WeDiv(X)$. If $\Cl(X)$ denotes
the quotient group of their equivalence classes, then $T_X$ induces a morphism
$\Pic(X) \rightarrow \Cl(X)$.

These two homomorphisms ($T_X$ and the induced one) are in general neither
injective nor surjective. In this sense one has the following result.

\begin{theo}{\em (cf.~\cite[21.6]{GD67}).}\label{cartier_weil_prop}~If $X$ is
normal (resp.~locally factorial) then the previous maps $\CaDiv(X) \to
\WeDiv(X)$ and $\Pic(X) \to \Cl(X)$ are injective (resp.~bijective). The image
of the first map is the subgroup of locally principal\footnote{A Weil divisor
$D$ on $X$ is said to be {\em locally principal} if $X$ can be covered by open
sets $U$ such that $D|_U$ is principal for each $U$.} Weil divisors. 
\end{theo}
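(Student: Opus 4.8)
The plan is to reduce everything to two standard facts of commutative algebra applied to the stalks $\cO_{X,x}$ and to the one-dimensional local rings $\cO_{X,V}$, and then to carry out the bookkeeping locally. The two facts I would isolate first are: (i) if $X$ is normal, then each $\cO_{X,V}$ is a one-dimensional normal Noetherian local domain, hence a discrete valuation ring, so $\ord_V$ is the associated valuation (this matches the length definition, since $\length_{\cO_{X,V}}(\cO_{X,V}/\langle t^{k}u\rangle)=k$ for a uniformizer $t$ and a unit $u$), and moreover $\cO_{X,x}=\bigcap_{V\ni x}\cO_{X,V}$ by Serre's normality criterion, the intersection ranging over the codimension one subvarieties through $x$; (ii) if in addition $X$ is locally factorial, i.e.\ every $\cO_{X,x}$ is a unique factorization domain, then every height one prime of $\cO_{X,x}$ is principal.

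\emph{Injectivity and image in the normal case.} Given a Cartier divisor $D$ represented by $\{(U_i,f_i)\}$ with $T_X(D)=0$, every $V$ meeting $U_i$ satisfies $\ord_V(f_i)=0$, so by (i) both $f_i$ and $f_i^{-1}$ lie in $\cO_{X,x}$ for each $x\in U_i$, whence $f_i\in\Gamma(U_i,\cO_X^{*})$ and $D=0$; this gives injectivity of $T_X$. The same computation describes the image: if $D=T_X(C)$ with $C=\{(U_i,f_i)\}$, then $D|_{U_i}=\sum_V\ord_V(f_i)[V]$ is principal, so $D$ is locally principal; conversely, if $D$ is locally principal, pick $f_i\in\Gamma(U_i,\mathcal{K}_X^{*})$ with $D|_{U_i}=\sum_V\ord_V(f_i)[V]$, observe that on overlaps $\ord_V(f_i/f_j)=0$ for all $V$ so that $f_i/f_j\in\Gamma(U_i\cap U_j,\cO_X^{*})$, and conclude that $\{(U_i,f_i)\}$ is a Cartier divisor mapping onto $D$. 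Passing to classes, if a Cartier divisor $C$ maps to the principal Weil divisor $T_X((f)_X)$, then $C-(f)_X$ has vanishing image, hence is principal by injectivity; therefore $\Pic(X)\to\Cl(X)$ is injective as well.

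\emph{Bijectivity in the locally factorial case.} Since locally factorial implies normal, it remains only to prove surjectivity of $T_X$, and by the previous paragraph it suffices to show that every Weil divisor is locally principal. Fixing $x$ and writing $D$ near $x$ as a finite sum $\sum_V n_V[V]$ over the codimension one subvarieties through $x$, each such $V$ corresponds to a height one prime $\mathfrak{p}_V\subset\cO_{X,x}$, which is principal by (ii), say $\mathfrak{p}_V=\langle g_V\rangle$ with $g_V$ irreducible; then the germ of meromorphic function $f:=\prod_V g_V^{n_V}$ is a local equation for $D$ at $x$. Hence $D$ is locally principal, so in the image of $T_X$; together with injectivity this shows that $T_X$ and the induced map $\Pic(X)\to\Cl(X)$ are bijective.

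The routine steps — additivity and local finiteness of $\ord_V$, and the gluing of the pairs $(U_i,f_i)$ — are immediate from the locality of every assertion above. The only genuine point requiring care is the validity of (i) and (ii) for complex-analytic stalks: that $\cO_{X,x}$ is Noetherian, and that normality (resp.\ factoriality) of $X$ as an analytic space is precisely the condition that each stalk be a normal domain (resp.\ a UFD), so that Serre's criterion and the principality of height one primes in a UFD apply verbatim. This is where I would invoke the standard structure theory of local rings of complex spaces, after which the statement is the analytic transcription of \cite[21.6]{GD67}.
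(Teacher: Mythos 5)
The paper does not actually prove this statement: it is quoted from \cite[21.6]{GD67} (with a ``cf.''), so there is no internal argument to compare yours against. Your proposal is the standard valuation-theoretic proof and its outline is sound: normality gives that each $\cO_{X,V}$ is a discrete valuation ring and that a meromorphic germ with non-negative order along every height-one prime is holomorphic, which yields injectivity of $T_X$ and the identification of its image with the locally principal Weil divisors; local factoriality gives principality of height-one primes, hence local principality of every Weil divisor and therefore surjectivity, and the statements for $\Pic(X)\to\Cl(X)$ follow formally. Two points deserve more care than your closing remark suggests, both coming from the analytic (rather than scheme-theoretic) setting in which the paper states the result. First, your facts (i)--(ii) live at the level of height-one primes of the analytic local rings $\cO_{X,x}$, i.e.\ of irreducible \emph{germs} of hypersurfaces, whereas $T_X$ is defined by summing over codimension-one subvarieties of $X$; to pass from ``$\ord_V(f_i)=0$ for all global $V$'' to ``$f_i\in\cO_{X,x}^{*}$'' you must know that every branch along which $f_i$ has nonzero valuation is a branch of a global irreducible component of the support of the Cartier divisor (this holds because the zero and polar sets of the local equations glue to closed analytic subsets of $X$), and that the valuation along such a branch at a possibly singular point equals the generic order $\ord_V$ --- this is exactly the paper's remark that $\ord_{V,x}$ is independent of the chosen point, via connectedness of the regular part of $V$. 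Second, in the locally factorial step the ideal of a global $V$ in $\cO_{X,x}$ need not be prime (it is an intersection of the height-one primes of the branches of $V$ at $x$), so the local equation should be the product of generators of all those branch primes, each raised to the power $n_V$; this is a one-line repair. With these adjustments your argument is a correct analytic transcription of the EGA proof and is, if anything, more self-contained than the paper, which relies entirely on the citation.
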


\begin{remark}\label{remark_cartier_weil_prop}
Locally factorial essentially means that every local ring $\cO_{X,x}$ is a unique
factorization domain. In particular, every smooth analytic variety is locally
factorial. In such a case, Cartier and Weil divisors are identified and denoted
by $\Div(X) := \CaDiv(X) = \WeDiv(X)$. Their equivalence classes coincide under
this identification and we often write $\Pic(X) = \Cl(X)$.
\end{remark}

\begin{ex}\label{ex_non-cartier_weil_divisor}
Let $X$ be the surface in $\C^3$ defined by the equation $z^2=xy$. The line
$V=\{x=z=0\}$ defines a Weil divisor which is not a Cartier divisor. In this
case $\Pic(X)=0$ and $\Cl(X) = \Z/(2)$. Note that $X$ is normal but not locally
factorial. 
However, the associated Weil divisor of $\{(X,x)\}$ is
$$
  T_X \big( \{(X,x) \} \big) = \sum_{\begin{subarray}{c} Z\subset X,\,\,
\text{irred}\\ \codim(Z)=1 \end{subarray}} \hspace{-0.1cm} \ord_Z(x) \cdot [Z] =
2 [V].
$$
Thus $[V]$ is principal as an element in $\WeDiv(X) \otimes_{\Z} \mathbb{Q}$ and
corresponds to the $\mathbb{Q}$-Cartier divisor $\frac{1}{2} \{(X,x)\}$. 

This fact can be interpreted as follows. First note that identifying our
surface~$X$ with $X(2;1,1)$ under $[(x,y)] \mapsto (x^2,y^2,xy),$ the previous
Weil divisor corresponds to $D=\{x=0\}$. Although $f=x$ defines a zero set on
$X(2;1,1)$, it does not induce a function on the quotient space.
However, $x^2: X(2;1,1) \to \C$ is a well-defined function and gives rise to the
same zero set as $f$. Hence as $\mathbb{Q}$-Cartier divisors one writes $D =
\frac{1}{2} \{(X(2;1,1),x^2)\}.$
\end{ex}

\subsection{Divisors on $V$-manifolds}

\mbox{}

Example~\ref{ex_non-cartier_weil_divisor} above illustrates the general behavior
of Cartier and Weil divisors on $V$-manifolds, namely Weil divisors are all
locally principal over $\mathbb{Q}$. To prove it we need some preliminaries.

\begin{lemma}\label{lemma_cartier_weil_XdA}
Let $B \subset \C^n$ be an open ball and let $G$ be a finite group acting on
$B$. Then one has $\Cl(B/G) \otimes_{\Z} \mathbb{Q} = 0$.
\end{lemma}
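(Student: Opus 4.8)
The plan is to reduce the statement to a computation with $G$-invariant functions and the classical fact that, on a smooth variety, every Weil divisor is locally principal. First I would recall that $X := B/G$ is normal (it is a quotient of a normal variety by a finite group), so by Theorem~\ref{cartier_weil_prop} the natural map $\Pic(X) \to \Cl(X)$ is injective; moreover, since $X$ is a $V$-manifold with an isolated-in-codimension-one singular locus, after shrinking and stratifying we may assume the singular locus has codimension at least two (rotations around hyperplanes are absorbed into $G_{\text{big}}$, so we may take $G$ small, and then $\Sing(X)$ has codimension $\geq 2$). Hence it suffices to show: for every irreducible Weil divisor $W \subset X$, some positive integer multiple $mW$ is a principal (Cartier) divisor. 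Equivalently, I must produce a meromorphic function on $X$ — i.e.\ a $G$-invariant meromorphic function on $B$ — whose associated Weil divisor is exactly $mW$ for suitable $m>0$.

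The key step is the following. Let $p\colon B \to X = B/G$ be the quotient map and let $W \subset X$ be an irreducible codimension-one subvariety. Its preimage $p^{-1}(W) \subset B$ is a (possibly reducible) codimension-one analytic subset of the \emph{smooth} ball $B$; write $p^{-1}(W) = \bigcup_{j} W_j$ for its irreducible components, which $G$ permutes transitively (since $W$ is irreducible). Because $B$ is smooth, hence locally factorial, each $W_j$ is (after possibly shrinking $B$ around a chosen point, but in fact globally on a ball by Remark~\ref{remark_cartier_weil_prop}) the divisor of a holomorphic function; more robustly, the divisor $D := \sum_j W_j$ on $B$ is $G$-invariant as a Weil divisor, and on the smooth variety $B$ every $G$-invariant divisor is the divisor of a meromorphic function $h$ — one can take $h = \prod_{g\in G} g^*h_0$ where $(h_0) $ involves $W_1$, or argue directly that $\Cl(B)=0$ so $D=(h)$ for some $h\in\Gamma(B,\mathcal K_B^*)$ and then replace $h$ by $\prod_{g\in G} g^*h$, which still has divisor a positive integer multiple of $D$ and is now $G$-invariant. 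A $G$-invariant meromorphic function on $B$ descends to a meromorphic function $\bar h$ on $X$, and unwinding the order functions through the (finite, étale in codimension one) map $p$ shows $\operatorname{div}_X(\bar h) = m\,[W]$ for some explicit $m>0$ (essentially $m = |G|/|\mathrm{Stab}|$ times a ramification factor). Therefore $m[W]$ is principal, so $[W]=0$ in $\Cl(X)\otimes_{\Z}\mathbb Q$.

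Finally I would assemble these: $\Cl(X)$ is generated as an abelian group by the classes $[W]$ of irreducible codimension-one subvarieties, each of which is torsion after tensoring with $\mathbb Q$ by the previous paragraph; hence $\Cl(X)\otimes_{\Z}\mathbb Q = 0$, which is the assertion.

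The main obstacle I expect is the bookkeeping in the order-function computation $\operatorname{ord}_W(\bar h) = $ (something)$\cdot\operatorname{ord}_{W_j}(h)$ when $W$ (equivalently $W_j$) meets the singular locus of $X$ in codimension one—but this is precisely excluded once $G$ is taken small, so along the generic point of $W$ the map $p$ is a quotient by a finite cyclic stabilizer acting on a smooth two-dimensional transverse slice, where the comparison of orders is the elementary local computation already illustrated in Example~\ref{ex_non-cartier_weil_divisor} (the $X(2;1,1)$ case). So the only real work is making the "$G$-invariant divisor on a ball is principal" and "descend and compare orders" steps precise; neither is deep, but the second requires care about which integer $m$ appears.
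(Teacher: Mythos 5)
Your argument is essentially the paper's: pull the irreducible divisor back to the ball $B$, use that every divisor on $B$ is principal (the paper takes a single holomorphic $f$ with $\pi^{-1}(V)=\{f=0\}$), symmetrize over $G$ to get a $G$-invariant function that descends to $B/G$, and conclude that a positive multiple of $[V]$ is principal, so $\Cl(B/G)\otimes_{\Z}\mathbb{Q}=0$. Your extra reductions (taking $G$ small, controlling the exact multiple $m$ via ramification) are harmless but not needed here, since only the positivity of $\ord_V$ of the descended function matters; the precise value is deferred in the paper to Proposition~\ref{computation_ord_V_F}.
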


\begin{proof}
Let $V \subset B/G =:U$ be an irreducible subvariety of codimension one. We
shall prove that there exists $k \geq 1$ such that $k [V] \in \WeDiv(U)$ is
principal.

Consider the natural projection $\pi: B \to U$. Then $W = \pi^{-1}(V)$ gives
rise to an effective Weil divisor on the open ball $B$. 
There exists $f:B \to \C$ a holomorphic function such that $W = \{f=0\} \subset
B$. Thus,
$$
V = \pi(W) = \{ [\mathbf{x}] \mid \mathbf{x}\in B,\, f(\mathbf{x}) = 0 \} = \{ f=0 \}
\subset U.
$$

Moreover, the holomorphic function $f$ must satisfy the following property
\begin{equation}\label{property_for_quasi-function}
  \forall P \in U,\, \big[ f(P) = 0 \, \Longrightarrow \, f( \sigma \cdot P ) =
0, \forall \sigma \in G \big].
\end{equation}

Since $f$ does not induce an analytic function on $U$, although $V$ is given by
just one equation, it is not necessarily principal as an element in $\WeDiv(U)$,
see Example~\ref{ex_non-cartier_weil_divisor}. Now the main idea is to change
$f$ by another holomorphic function $F$ such that $V=\{F=0\}$ but now with $F
\in \Gamma(U,\cO_{U})$.

Let us consider $F = \prod_{\sigma \in G} f^{\sigma}$ where $f^{\sigma}(\mathbf{x})
= f(\sigma \cdot \mathbf{x})$; clearly it verifies the previous conditions. Then
$\{(U,F)\}$ is a principal Cartier divisor and its associated Weil divisor is 
$$
  T_U \big( \{(U,F)\} \big) = \sum_{\begin{subarray}{c} Z\subset U,\,\,
\text{irred}\\ \codim(Z)=1 \end{subarray}} \hspace{-0.1cm} \ord_{Z}(F) \cdot [Z]
= \ord_V(F) \cdot [V].
$$
Note that $\ord_Z(F) \neq 0$ implies $Z=V$, since $V$ is irreducible.
\end{proof}

\begin{remark}
Note that the proof of this result is based on an idea extracted
from~\cite[Ex.~1.7.6]{Fulton98}, where an explicit isomorphism between
$\WeDiv(B/G) \otimes_{\Z} \mathbb{Q}$ and $(\WeDiv(B) \otimes_{\Z}
\mathbb{Q})^G$ is given.
\end{remark}

\begin{theo}\label{cartier_weil_XdA}
Let $X$ be a $V$-manifold. Then the notion of Cartier and Weil divisor coincide
over $\mathbb{Q}$. More precisely, the linear map 
$$
  T_X \otimes 1: \CaDiv(X) \otimes_{\Z} \mathbb{Q} \longrightarrow \WeDiv(X)
\otimes_{\Z} \mathbb{Q} 
$$
is an isomorphism of $\mathbb{Q}$-vector spaces. In particular, for a given Weil
divisor $D$ on $X$ there always exists $k \in \Z$ such that $kD \in \CaDiv(X)$.
\end{theo}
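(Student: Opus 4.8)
The plan is to derive the theorem formally from Lemma~\ref{lemma_cartier_weil_XdA} and Theorem~\ref{cartier_weil_prop}, the only extra input being that every $V$-manifold is a normal variety: locally it is a quotient $B/G$ of a ball by a finite group, and the ring of invariants of the normal ring $\cO_B$ under $G$ is again normal (and a domain, $B$ being connected), so $X$ is normal.

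For injectivity: since $X$ is normal, Theorem~\ref{cartier_weil_prop} gives that $T_X\colon \CaDiv(X)\to\WeDiv(X)$ is injective and that its image is precisely the subgroup of locally principal Weil divisors. As $\mathbb{Q}$ is torsion-free, hence flat over $\Z$, applying $-\otimes_\Z\mathbb{Q}$ to the injection $0\to\CaDiv(X)\to\WeDiv(X)$ preserves injectivity, so $T_X\otimes 1$ is injective.

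For surjectivity: by the description of $\operatorname{im}(T_X)$ just recalled, it suffices to prove that for every Weil divisor $D$ on $X$ there is an integer $k\geq 1$ such that $kD$ is locally principal. Take the defining covering $\{U_i\}$ with $U_i\cong B_i/G_i$. On each $U_i$, Lemma~\ref{lemma_cartier_weil_XdA} says $\Cl(U_i)\otimes_\Z\mathbb{Q}=0$, so $\Cl(U_i)$ is a torsion group; hence the class of $D|_{U_i}$ has finite order, i.e.\ there is $k_i\geq 1$ and $g_i\in\Gamma(U_i,\mathcal{K}_X^{*})$ with $k_i\,D|_{U_i}=T_{U_i}(\{(U_i,g_i)\})$. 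If $k$ is a common multiple of the $k_i$, then $k\,D|_{U_i}$ is the divisor of $g_i^{k/k_i}$ on $U_i$; and on each overlap $U_i\cap U_j$ the functions $g_i^{k/k_i}$ and $g_j^{k/k_j}$ define principal Cartier divisors with the same associated Weil divisor $(kD)|_{U_i\cap U_j}$, so by the injectivity of $T_{U_i\cap U_j}$ (again $U_i\cap U_j$ is normal) they coincide as Cartier divisors, i.e.\ $g_i^{k/k_i}/g_j^{k/k_j}\in\cO_X^{*}(U_i\cap U_j)$. Therefore $\{(U_i,g_i^{k/k_i})\}$ is a well-defined Cartier divisor whose associated Weil divisor is $kD$, and $T_X\otimes 1$ is surjective.

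The only delicate point is the existence of a \emph{single} common multiple $k$ of the local exponents $k_i$. This is automatic whenever the covering $\{U_i\}$ may be taken finite, which is the case in all situations of interest here: $X$ compact (for instance a weighted projective space, where a finite subcover exists), or $X$ a germ $\C^{n+1}/\mu_\bd$ (a single chart already suffices), and more generally whenever $X$ is quasi-compact. One can also phrase the whole argument sheaf-theoretically through the inclusion $\mathcal{K}_X^{*}/\cO_X^{*}\hookrightarrow\mathcal{Z}_X$ into the sheaf $\mathcal{Z}_X$ of Weil divisors, whose cokernel has torsion stalks by Lemma~\ref{lemma_cartier_weil_XdA}; the same issue then reappears as the passage from torsion stalks to a torsion group of global sections. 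In any case the substantive content has already been isolated in Lemma~\ref{lemma_cartier_weil_XdA}, and what is left is this bookkeeping together with the formal input of Theorem~\ref{cartier_weil_prop}.
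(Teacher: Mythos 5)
Your argument follows essentially the same route as the paper's proof: normality of $X$ feeds into Theorem~\ref{cartier_weil_prop} (injectivity of $T_X$ and the identification of its image with the locally principal Weil divisors), and Lemma~\ref{lemma_cartier_weil_XdA} supplies the local input $\Cl(U_i)\otimes_{\Z}\mathbb{Q}=0$ on the charts $U_i\cong B_i/G_i$. Where you differ is only in the bookkeeping at the end: the paper restricts to a prime divisor $V$, observes that each $[V|_{U_i}]$ is $\mathbb{Q}$-principal, and concludes at once that $V$ lies in the image of $T_X\otimes 1$; you treat a general Weil divisor $D$, extract local exponents $k_i$ and local equations $g_i$, pass to a common multiple $k$, and glue the $g_i^{k/k_i}$ into an honest Cartier divisor representing $kD$. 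The gluing via injectivity of $T_{U_i\cap U_j}$ is correct but not strictly needed: once a single $k$ makes $kD|_{U_i}$ principal on every chart, $kD$ is locally principal and Theorem~\ref{cartier_weil_prop} already puts it in the image. Treating a general $D$ rather than a single prime is in fact slightly better suited to the analytic category, where a Weil divisor may be an infinite locally finite sum of primes.

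The ``delicate point'' you isolate, the existence of one $k$ valid on all charts, is precisely the step the paper's proof passes over in silence, and it is a genuine one: if $D$ lies in the image of $T_X\otimes 1$, then clearing denominators and using that $\WeDiv(X)$ is torsion free shows that some integer multiple $kD$ must be locally principal on all of $X$, so local $\mathbb{Q}$-principality with unbounded local denominators is not by itself enough. Without some finiteness this can actually fail: blow up $\C^2$ at the points $(n,0)$, $n\geq 2$, with weights $(n,1)$ chosen so that the strict transform $D$ of the line $\{y=0\}$ passes through a singular point of type $(n;-1,1)$ for every $n$; then no single $k$ makes $kD$ Cartier, so the last clause of the statement needs the covering (or at least the orders $|G_i|$ met by $D$) to be bounded. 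Your restriction to situations admitting a finite subcover -- compact $X$, germs, algebraic $V$-manifolds, which are the only settings in which the paper uses the theorem, cf.~\ref{how_to_write_summarize} -- is therefore the honest way to close the argument, and with that caveat your proof is complete and matches the paper's strategy.
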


\begin{proof}
The variety $X$ is normal and then Theorem~\ref{cartier_weil_prop} applies.
Therefore the linear map $T_X \otimes 1$ is injective and its image is the
$\mathbb{Q}$-vector space generated by the locally principal Weil divisors on
$X$.

Let $V \subset X$ be an irreducible subvariety of codimension one. Consider
$\{U_i\}_i$ an open covering of $X$ such that $U_i$ is analytically isomorphic
to $B_i/G_i$ where $B_i \subset \C^{n}$ is an open ball and $G_i$ is a finite
subgroup of $GL(n,\C)$. By Lemma~\ref{lemma_cartier_weil_XdA}, $\Cl(U_i) \otimes
\mathbb{Q} = 0$ for all $i$.

Thus $[V|_{U_i}]$ is principal as an element in $\WeDiv(U_i) \otimes_{\Z}
\mathbb{Q}$ which implies that $V$ is locally principal over $\mathbb{Q}$ and
hence belongs to the image of $T_X \otimes 1$.
\end{proof}

\subsection{Writing a Weil divisor as a $\mathbb{Q}$-Cartier divisor}
\mbox{}

Following the proofs of Lemma~\ref{lemma_cartier_weil_XdA} and
Theorem~\ref{cartier_weil_XdA}, every Weil divisor on~$X$ can locally be written
as $\mathbb{Q}$-Cartier divisor like
$$
[V|_U] = \frac{1}{\ord_V(F)} \{(U,F)\}
$$
where $F = \prod_{\sigma \in G} f^{\sigma}$ and $V \cap U =\{f=0\}$ with $f:B
\to \C$ being holomorphic on an open ball and
satisfying~(\ref{property_for_quasi-function}).

The rest of this section is devoted to calculate explicitly $\ord_V(F)$.
Firstly, it is shown in Proposition~\ref{prop_not_well-defined_holomorphic} that
$F$ is essentially a power of $f$, if the latter is chosen properly. Then,
$\ord_V(F)$ is computed in Proposition~\ref{computation_ord_V_F}.

\begin{prop}\label{prop_not_well-defined_holomorphic}
Let $f:B \to \C$ be a non-zero holomorphic function on an open ball $B \subset
\C^n$ such that the germ $f_x \in \cO_{B,x}$ is reduced for all $x \in B$. Let
$G$ be a finite subgroup of $GL(n,\C)$ acting on $B$. As above consider $F =
\prod_{\sigma \in G} f^{\sigma}$ where $f^{\sigma}(\mathbf{x}) = f(\sigma \cdot
\mathbf{x})$.
The following conditions are equivalent: 
\begin{enumerate}[\rm(1)]
\item\label{prop_not_well-defined_holomorphic1} $\forall P \in B, \, \big[ f(P)=0 \, \Longrightarrow \, f(\sigma \cdot P)
= 0, \, \forall \sigma \in G \big]$.

\item\label{prop_not_well-defined_holomorphic2} $\forall \sigma \in G$, $\exists h_{\sigma} \in \Gamma (B, \cO_{B}^{*})$
such that $f^{\sigma} = h_{\sigma} f$.

\item\label{prop_not_well-defined_holomorphic3} $\exists h \in \Gamma(B,\cO_B^{*})$ such that $F = h f^{|G|}$.

\item\label{prop_not_well-defined_holomorphic4} $\exists k \geq 1$, $\exists h \in \Gamma (B,\cO_{B}^{*})$ such that $h f^k
\in \Gamma(B/G,\cO_{B/G})$.
\end{enumerate}
\end{prop}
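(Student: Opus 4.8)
The plan is to prove the cyclic chain of implications
$(\ref{prop_not_well-defined_holomorphic1}) \Rightarrow
(\ref{prop_not_well-defined_holomorphic2}) \Rightarrow
(\ref{prop_not_well-defined_holomorphic3}) \Rightarrow
(\ref{prop_not_well-defined_holomorphic4}) \Rightarrow
(\ref{prop_not_well-defined_holomorphic1})$, with the first implication being the only one that requires analytic work; the other three are essentially formal manipulations.

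For $(\ref{prop_not_well-defined_holomorphic1}) \Rightarrow (\ref{prop_not_well-defined_holomorphic2})$, fix $\sigma \in G$. Since $\sigma$ is a linear automorphism of $B$, the function $f^{\sigma}$ is holomorphic and the germ $f^{\sigma}_x$ is reduced for every $x$ (reducedness is preserved by the biholomorphism induced by $\sigma$). The hypothesis $(\ref{prop_not_well-defined_holomorphic1})$ says exactly that the zero set $\{f = 0\}$ is contained in $\{f^{\sigma} = 0\}$ (apply the implication with $\sigma^{-1}$ in place of $\sigma$ to get the reverse inclusion, so in fact the zero sets coincide). Working locally at a point $x \in B$, factor $f_x$ and $f^{\sigma}_x$ into irreducibles in the UFD $\cO_{B,x}$; since both germs are reduced and cut out the same analytic set germ, they have the same irreducible factors up to units, hence $f^{\sigma}_x = (\text{unit}) \cdot f_x$ in $\cO_{B,x}$. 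Thus $f^{\sigma}/f$, which a priori is only meromorphic, extends holomorphically and without zeros across $\{f=0\}$; by the Riemann extension theorem (or Hartogs, as $\{f=0\}$ has codimension one and $n\geq 1$ with $B$ a ball) the quotient $h_\sigma := f^{\sigma}/f$ is a globally defined nowhere-vanishing holomorphic function on $B$, i.e. $h_\sigma \in \Gamma(B,\cO_B^*)$. This is the step I expect to be the main obstacle, since it requires combining the local factoriality of $\cO_{B,x}$ with a global extension argument and care that $B$ being a ball (hence connected and simply connected) makes $h_\sigma$ well-defined globally rather than just locally.

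For $(\ref{prop_not_well-defined_holomorphic2}) \Rightarrow (\ref{prop_not_well-defined_holomorphic3})$, simply multiply: $F = \prod_{\sigma \in G} f^{\sigma} = \prod_{\sigma \in G} h_\sigma f = \big(\prod_{\sigma \in G} h_\sigma\big) f^{|G|}$, and $h := \prod_{\sigma \in G} h_\sigma$ is a product of elements of $\Gamma(B,\cO_B^*)$, hence lies in $\Gamma(B,\cO_B^*)$. For $(\ref{prop_not_well-defined_holomorphic3}) \Rightarrow (\ref{prop_not_well-defined_holomorphic4})$, take $k := |G|$; the function $F = h f^{|G|}$ is by construction $G$-invariant (it is a product over the whole group, and precomposition with $\tau \in G$ just permutes the factors $f^{\sigma}$), so $h f^k = F$ descends to a holomorphic function on $B/G$, i.e. $h f^k \in \Gamma(B/G, \cO_{B/G})$.

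Finally, for $(\ref{prop_not_well-defined_holomorphic4}) \Rightarrow (\ref{prop_not_well-defined_holomorphic1})$, suppose $g := h f^k$ is $G$-invariant with $h$ a unit. If $f(P) = 0$ then $g(P) = 0$, and by invariance $g(\sigma \cdot P) = g(P) = 0$ for all $\sigma \in G$; since $h$ is nowhere vanishing, $g(\sigma\cdot P) = 0$ forces $f(\sigma \cdot P)^k = 0$, hence $f(\sigma \cdot P) = 0$. This closes the cycle and completes the proof.
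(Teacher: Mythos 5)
Your proof is correct and follows essentially the same route as the paper: the same cycle $(1)\Rightarrow(2)\Rightarrow(3)\Rightarrow(4)\Rightarrow(1)$, with $(2)\Rightarrow(3)\Rightarrow(4)\Rightarrow(1)$ handled by the identical formal manipulations. The only (harmless) variation is in $(1)\Rightarrow(2)$: you get that $f^{\sigma}_x$ and $f_x$ differ by a unit from the reducedness of \emph{both} germs together with equality of the zero sets (applying (1) to $\sigma^{-1}$), whereas the paper deduces it from $f^{\sigma}\in IV(f)$ plus a comparison of orders of the two germs, which agree because the action is linear.
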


\begin{proof}
For $\eqref{prop_not_well-defined_holomorphic1} \Rightarrow \eqref{prop_not_well-defined_holomorphic2}$, 
first note that $f^{\sigma} \in IV(f)$ (the ideal of the zero locus of $f$). Now fix $x\in
B$. Since $f_x$ is reduced, there exists a holomorphic function $h$ on a small
enough open neighborhood of $x$ such that as germs $(f^\sigma)_x = h_x f_x$. The
order of the converging power series $(f^{\sigma})_x$ and $f_x$ are equal
because the action is linear. Thus $h_x$ is a unit in $\cO_{B,x}$. In particular
$\frac{f^{\sigma}}{f}$ is holomorphic and does not vanish at $x \in B$.

For $\eqref{prop_not_well-defined_holomorphic2} \Rightarrow \eqref{prop_not_well-defined_holomorphic3}$, 
consider $h = \prod_{\sigma \in G} h_{\sigma}$. Then
one has
$$
  F = \prod_{\sigma \in G} f^{\sigma} = \prod_{\sigma \in G} (h_{\sigma} f) =
\bigg( \prod_{\sigma \in G} h_{\sigma} \bigg) \cdot f^{|G|} = h f^{|G|}.
$$

For $\eqref{prop_not_well-defined_holomorphic3} \Rightarrow \eqref{prop_not_well-defined_holomorphic4}$, 
since $F:B/G \to \C$ is analytic,  take $k = |G|$.
Finally, note that $\forall P \in B$, $f(P) = 0 \Longleftrightarrow (hf^{k})(P)
= (hf^k)(\sigma \cdot P) = 0 \Longleftrightarrow f(\sigma \cdot P) = 0$. Hence
$\eqref{prop_not_well-defined_holomorphic4} \Rightarrow \eqref{prop_not_well-defined_holomorphic1}$ 
follows and the proof is complete.
\end{proof}

This example shows that the reduced condition in the statement of the previous
result is necessary.

\begin{ex}\label{bad_exam_1}
Let $f = (x^2+y) (x^2-y)^3 \in \C[x,y]$ and consider the cyclic quotient space
$M = X(2;1,1)$. Then $\{ f = 0 \} \subset M$ defines a zero set, i.e.~the
condition $(1)$ holds, but there are no $k \geq 1$ and $h \in
\Gamma(B,\cO_B^{*})$ such that $h f^k$ is a well-defined function over $M$.
\end{ex}

\begin{prop}\label{computation_ord_V_F}
Let  $B \subset \C^n$ be an open ball and $G$ a finite subgroup of $GL(n,\C)$
acting on $B$. Let $V \subset B/G =: U$ be an irreducible subvariety of
codimension one and consider $$F = \prod_{\sigma \in G} f^{\sigma}$$ where $f:B
\to \C$ is a holomorphic function defining $V$.

If $G$ is small and $f$ is chosen so that $f_x \in \cO_{B,x}$ is reduced $\forall
x \in B$, then
$
\ord_V(F:U \to \C) = |G|.
$
\end{prop}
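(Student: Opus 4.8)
The plan is to compute $\ord_V(F)$ by relating the order of a function on $U=B/G$ along an irreducible divisor $V$ to the order on $B$ along the irreducible components of $W=\pi^{-1}(V)$. First I would invoke Proposition~\ref{prop_not_well-defined_holomorphic}: since $f$ is chosen reduced at every point and $V=\{f=0\}$ satisfies the zero-set condition~\eqref{property_for_quasi-function}, we get $F=hf^{|G|}$ for some unit $h\in\Gamma(B,\cO_B^*)$. Hence $F$ descends to a genuine holomorphic function on $U$ and, on $B$, its divisor along $W_{\mathrm{red}}$ is exactly $|G|$ times that of $f$. So the computation reduces to comparing $\ord_V$ (computed in $U$) with $\ord_{W'}$ for a component $W'$ of $W$ (computed in $B$).

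The key geometric input is that $G$ is \emph{small}: the stabilizer of a generic point of $B$ is trivial, and more importantly, $\pi$ is étale (a local analytic isomorphism) in codimension one away from the ramification locus, which has codimension $\geq 2$ precisely because $G$ contains no pseudo-reflections. Therefore, picking a generic smooth point $x\in V$ with smooth irreducible germ, I can lift to a point $\tilde x\in W$ where $\pi$ is a local biholomorphism; near $\tilde x$ the function $f$ has order one along the (single) branch of $W$ through $\tilde x$, and this branch maps isomorphically onto the germ of $V$ at $x$. Using the description of $\ord_V$ in~\ref{order_function} as the classical order at a generic smooth point, I conclude $\ord_V(h f^{|G|}) = |G|\cdot\ord_{\tilde x}(f) = |G|$, since $h$ is a unit and $\ord_{\tilde x}(f)=1$ by reducedness. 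One should check that no component of $W$ other than the one through $\tilde x$ contributes, which follows because $V$ (hence $W$, as $\pi^{-1}$ of an irreducible set under a finite map, up to the $G$-action permuting components) has all its codimension-one behaviour captured at a single generic point; equivalently, $F=hf^{|G|}$ already pins down the order along $V$ directly on $U$ once we know $U$ is smooth in codimension one away from $\codim\geq 2$ loci, which it is since $G$ is small.

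The main obstacle I expect is the careful bookkeeping at step two: namely justifying that the local ring $\cO_{U,V}$ is a discrete valuation ring and that the valuation pulls back correctly under $\pi$, i.e.\ that $\ord_V(g)$ for $g$ descending from $B$ equals $\ord_{W'}(g)$ for the relevant component $W'$ with the right normalization. Since $G$ is small, $\pi$ is unramified over the generic point of $V$, so $\cO_{B,\tilde x}\to$ is unramified over $\cO_{U,x}$ at a generic $x$, giving the equality of orders with no ramification index. The subtlety is purely that one must choose $x$ generic enough to avoid both $\Sing(V)$ and the branch locus of $\pi$ (of codimension $\geq 2$), and then the formula $\ord_V(F)=\ord_V(hf^{|G|})=|G|$ drops out from $\ord_{\tilde x}(f)=1$ and multiplicativity of the order function. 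I would phrase the argument so that the smallness of $G$ enters exactly once, at the point where we assert $\pi$ is a local isomorphism in a neighbourhood of a generic point of each codimension-one subvariety.
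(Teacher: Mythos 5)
Your proposal is correct and takes essentially the same route as the paper: both arguments reduce to writing $F = h f^{|G|}$ via Proposition~\ref{prop_not_well-defined_holomorphic} and then compute $\ord_V(F)$ at a generic point of $V$ where, thanks to smallness of $G$, the quotient map $\pi:B\to U$ is a local biholomorphism, so reducedness of $f$ gives local order $1$ and hence $\ord_V(F)=|G|$. The only cosmetic difference is in how smallness is used: the paper picks a smooth point $[P]$ of $U$ with irreducible germ $(V,[P])$ and invokes Prill's theorem (Theorem~\ref{th_Prill}) to conclude the stabilizer is trivial, while you observe directly that a small group has fixed locus of codimension at least two, so a generic point of $V$ lies off the branch locus; these are interchangeable justifications of the same step.
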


\begin{proof}
Choose $[P] \in V$ such that $[P]$ is smooth in $U$ and $(V,[P])$ defines an
irreducible germ; then $\ord_V(F) = \ord_{V,[P]}(F)$, see~\ref{order_function}.

By Theorem~\ref{th_Prill}, since $G$ is small and $[P] \in U$ is smooth, using
the covering $\pi: B \to U$, one finds an isomorphism of germs $(U,[P]) \cong
(B/G_P,[P]) = (B,P)$ induced by the identity map.
The germ $(V,[P])$ is converted under this isomorphism into $(W,P)$ where $W$ is
the zero set of $f_P \in \cO_{B,P}$.

By Proposition~\ref{prop_not_well-defined_holomorphic}, there is $h \in
\Gamma(B,\cO_B^{*})$ such that $F = h f^{|G|}$. Then the required order is
$
\ord_{V,[P]}(F:U \to \C) = \ord_{V(f_P),P}(h f^{|G|}:B \to \C) = |G|
$
as claimed.
\end{proof}

\begin{nothing}\label{how_to_write_summarize}
Here we summarize how to write a Weil divisor as a $\mathbb{Q}$-Cartier divisor
where $X$ is an algebraic $V$-manifold.
\begin{enumerate} \setlength{\itemsep}{3pt}
\item Write $D = \sum_{i \in I} a_i [V_i] \in \WeDiv(X)$, where $a_i \in \Z$ and
$V_i \subset X$ irreducible. Also choose $\{U_j\}_{j\in J}$ an open covering of
$X$ such that $U_j = B_j/G_j$ where $B_j \subset \C^n$ is an open ball and $G_j$
is a {\bf small} finite subgroup of $GL(n,\C)$.

\item For each $(i,j) \in I \times J$ choose a reduce polynomial $f_{i,j}: U_j
\to \C$
such that $V_i \cap U_j = \{ f_{i,j} = 0 \}$. 
$$
[V_i|_{U_j}] = \frac{1}{|G_j|} \{ (U_j, f_{i,j}^{|G_j|}) \}
$$

\item Identifying $\{(U_j, f_{i,j}^{|G_j|})\}$ with its image $\CaDiv(U_j)
\hookrightarrow \CaDiv(X),$ one finally writes $D$ as a sum of locally principal
Cartier divisors over $\mathbb{Q}$,
$$
  D = \sum_{(i,j) \in I \times J} \frac{a_i}{|G_j|} \{ (U_j, f_{i,j}^{|G_j|})
\}.
$$
\end{enumerate}
\end{nothing}

We finish this section with an example where the exceptional divisor of a
weighted blow-up (which is in general just a Weil divisor) is written explicitly
as a $\mathbb{Q}$-Cartier divisor.

\begin{ex}\label{how_to_write_example}
Let $X$ be a surface with abelian quotient singularities. Let $\pi: \widehat{X}
\to X$ be the weighted blow-up at a point of type $(d;a,b)$ with respect to
$\w=(p,q)$. In general the exceptional divisor $E:=\pi^{-1}(0)\cong \bP^1_{\w}
(d;a,b)$ is a Weil divisor on $\widehat{X}$ which does not correspond to a
Cartier divisor. Let us write $E$ as an element in $\CaDiv(\widehat{X})
\otimes_{\Z} \mathbb{Q}$.

As in~\ref{blow-up2-sing-ab}, assume $\pi := \pi_{(d;a,b),\w}:
\widehat{X(d;a,b)}_{\w} \to X(d;a,b)$. Assume also that $\gcd(p,q)=1$ and
$(d;a,b)$ is normalized. Using the
notation introduced in~\ref{blow-up2-sing-ab}, the space $\widehat{X}$ is covered as $\widehat{U}_1
\cup \widehat{U}_2$ and the first chart is given by
\begin{equation}\label{isom_1st_chart_dim2}
\begin{array}{rcl}
Q_1 := X \Big( \frac{pd}{e}; 1, \frac{-q+\beta pb}{e} \Big) & \longrightarrow &
\widehat{U}_1, \\[0.25cm] \,\big[ (x^e,y) \big] & \mapsto & \big[ ((x^p,x^q
y),[1:y]_{\w}) \big]_{(d;a,b)},
\end{array}
\end{equation}
where $e := \gcd(d,pb-qa)$, see~\ref{blow-up2-sing-ab} for the details.

In the first chart $E$ is the Weil divisor $\{x=0\} \subset Q_1$. Note that the
type representing the space $Q_1$ is in a normalized form and hence the
corresponding subgroup of $GL(2,\C)$ is small.

Following the discussion~\ref{how_to_write_summarize}, the divisor $\{x=0\}
\subset Q_1$ is written as an element in $\CaDiv(Q_1) \otimes_{\Z} \mathbb{Q}$
like $\frac{e}{pd} \{ (Q_1,x^{\frac{pd}{e}}) \}$, which is mapped to
$\frac{e}{pd} \{ (\widehat{U}_1, x^d) \} \in \CaDiv(\widehat{U}_1) \otimes_{\Z}
\mathbb{Q}$ under the isomorphism~(\ref{isom_1st_chart_dim2}).

Analogously $E$ in the second chart is $\frac{e}{qd} \{ ( \widehat{U}_2,y^d)\}$.
Finally one writes the exceptional divisor of $\pi$ as claimed,
$$
E  = \frac{e}{dp} \big\{ (\widehat{U}_1, x^d), (\widehat{U}_2, 1) \big\} + 
\frac{e}{dq} \big\{ (\widehat{U}_1, 1), (\widehat{U}_2, y^d) \big\} =
\frac{e}{dpq} \big\{ (\widehat{U}_1, x^{dq}), (\widehat{U}_2, y^{dp}) \big\}.
$$
\end{ex}

\begin{defi}
Let $X$ be a $V$-manifold. The vector space of $\mathbb{Q}$-Cartier divisors is
identified under $T_X$ with the vector space of $\mathbb{Q}$-Weil divisors. A
{\em $\mathbb{Q}$-divisor} on $X$ is an element in $\CaDiv(X) \otimes_{\Z}
\mathbb{Q} = \WeDiv(X) \otimes_{\Z} \mathbb{Q}.$ The set of all
$\mathbb{Q}$-divisors on $X$ is denoted by $\mathbb{Q}$-$\Div(X)$.
\end{defi}

\section{Conclusions and Future Work}

In order to define the intersection number between two divisors the following
fact is essential. Let $X$ be a compact connected Riemann surface and let
$\pi:E\to X$ be a complex line bundle. Such a line bundle admits
meromorphic sections and the degree of such a section (sum of the order
of the zeroes minus the sum of the order of the poles) depends only
on $E$. In order to define the intersection number between two divisors
in a $V$-surface (one with compact support), we can assume (up to multiplication
by an integer) that they are Cartier divisors. Then, it is possible to define
the intersection number as the degree of the pull-back in the compact divisor of the line bundle
associated to the other divisor. These ideas will be developed in a \cite{kjj-inter}.
The following example illustrates these ideas.

\begin{ex}\label{example_X211_intersection}
Let $X=X(2;1,1)$ and consider the Weil divisors $D_1 = \{x=0\}$ and $D_2 = \{y=0\}$. 
Let us compute the Weil divisor associated with $j^{*}_{D_1} D_2$, where $j_{D_1}: |D_1| \hookrightarrow X$ 
is the inclusion. Following~\ref{how_to_write_summarize}, the divisor $D_2$ can be written as $\frac{1}{2} \{(X,y^2)\}$. 
By definition, since $|D_1| \nsubseteq |D_2|$, the pull-back is
$
  j^{*}_{D_1} D_2 = \frac{1}{2} \big\{(D_1, y^2|_{D_1}) \big\},
$
and its associated Weil divisor is 
$$
  T_{D_1} ( j^{*}_{D_1} D_2 )  = 
\frac{1}{2} \sum_{P \in D_1} \ord_P ( y^2|_{D_1} ) \cdot [P]  = 
\frac{1}{2} \ord_{[(0,0)]} ( y^2|_{D_1} ) \cdot [(0,0)] \ =  \ \frac{1}{2} \cdot [(0,0)].
$$
Note that there is an isomorphism $D_1 = X(2;1) \simeq \C$, $[y] \mapsto y^2$, 
and the function $y^2: D_1 \to \C$ is converted into the identity map $\C \to \C$ 
under this isomorphism. Hence $\ord_{[(0,0)]} ( y^2|_{D_1} ) = 1$. 
It is natural to define the (global and local) intersection multiplicity as
$
D_1 \cdot D_2 = (D_1 \cdot D_2)_{[(0,0)]} = \frac{1}{2}.
$
\end{ex}

\bibliographystyle{amsplain}
\bibliography{./references/ref_PhD}

\vspace{0.5cm}

\end{document}